\documentclass[12pt]{article}
\usepackage{amsmath}
\usepackage{amsthm}
\usepackage{amssymb}
\usepackage{mathtools}
\usepackage{graphics}
\usepackage{enumerate} 
\usepackage{setspace}
 \newcommand{\be}{\begin{equation}}
\newcommand{\ee}{\end{equation}}
\newcommand{\ben}{\begin{eqnarray*}}
\newcommand{\een}{\end{eqnarray*}}
\newtheorem{examp}{\sc Example}
\newtheorem{remk}{\sc Remark}
\newtheorem{corol}{\sc Corollary}
\newtheorem{lemma}{\sc Lemma}
\newtheorem{theorem}{\sc Theorem}
\newtheorem{defn}{\sc Definition}
\newcommand{\bt}{\begin{theorem}}
\newcommand{\et}{\end{theorem}}
\newcommand{\bl}{\begin{lemma}}
\newcommand{\el}{\end{lemma}}
\newcommand{\bed}{\begin{defn}}
\newcommand{\eed}{\end{defn}}
\newcommand{\brem}{\begin{remk}}
\newcommand{\erem}{\end{remk}}
\newcommand{\bex}{\begin{examp}}
\newcommand{\eex}{\end{examp}}
\newcommand{\bcl}{\begin{corol}}
\newcommand{\ecl}{\end{corol}}

\topmargin -0.3cm \evensidemargin -0.2cm \oddsidemargin -0.2cm
\textheight 9in \textwidth 6in

\newcommand{\NI}{\noindent}

\newcommand{\al}{\alpha}

\newcommand{\dsp}{\displaystyle}
\newcommand{\vsp}{\vskip 0.5em}


\theoremstyle{definition}
\theoremstyle{remark}

\numberwithin{equation}{section}
\numberwithin{theorem}{section}
\numberwithin{lemma}{section}

\begin{document}
\title {On Some Properties of $K$- type Block Matrices in the context of Complementarity Problem } 
\author{A. Dutta$^{a, 1}$,  A. K. Das$^{b}$\\
\emph{\small $^{a}$Jadavpur University, Kolkata , 700 032, India.}\\	
\emph{\small $^{b}$Indian Statistical Institute, 203 B. T.
	Road, Kolkata, 700 108, India.}\\
\emph{\small $^{1}$Email: aritradutta001@gmail.com}}
\date{}
\maketitle

\date{}
\maketitle
	\begin{abstract}
	\NI In this article we introduce $K$-type block matrices which include two new classes of block matrices namely block triangular $K$-matrices and hidden block triangular $K$-matrices. We show that the solution of linear complementarity problem with $K$-type block matrices can be obtained by solving a linear programming problem. We show that block triangular $K$-matrices satisfy least element property.  We prove that hidden block triangular $K$-matrices are $Q_0$ and processable by Lemke's algorithm. The purpose of this article is to study  properties of  $K$-type block matrices in  the context of the solution of linear complementarity problem.
	\noindent \\
	\NI{\bf Keywords:} $Z$-matrix, Hidden $Z$-matrix, linear programming problem, linear complementary problem, semi-sublattice,  $P$-matrix, $Q_0$-matrix.\\ 
	
	\NI{\bf AMS subject classifications:} 90C33, 90C51, 15A39, 15B99.

\end{abstract}

\section{Introduction}
The linear complementarity problem is a combination of linear and nonlinear system of inequalities and equations. The problem may be stated as follows:
Given $M\in R^{n\times n}$ and  a vector  $\,q\,\in\,R^{n},\,$ the  linear complementarity problem, LCP$(M,q)$  is the problem of finding a solution $w\;\in R^{n}\;$ and  $z\;\in R^{n}\;$ to the following system of linear equations and inequalities:
\be \label{lcp1}
\dsp {w\,-\,M z\;=\;q,\;\;w\geq 0,\;z\geq 0}
\ee
\be \label{lcp2}
\dsp {w^{T}\,z\,=\;0}.
\ee
 In complementarity theory several matrix classes are considered due to the study of theoritical properties, applications and its solution methods. For details see \cite{neogy2006some}, \cite{neogy2013weak}, \cite{neogy2005almost}, \cite{neogy2011singular}, \cite{das2017finiteness}. The complementarity problem plays an important role in the  formulation of structured stochastic game problems. For details see \cite{mondal2016discounted}, \cite{neogy2008mathematical}, \cite{neogy2008mixture}, \cite{neogy2016optimization}, \cite{neogy2005linear}. The complementarity problem establishes an important connections with multiobjective programming problem for KKT point and the solution point \cite{mohan2004note}. The complementarity problems are considered with respect to principal pivot transforms and pivotal method to its solution point of view. For details see \cite{neogy2012generalized}, \cite{das2016properties}, \cite{neogy2005principal}.
It is well known that the linear complementarity problem can be solved by a linear program if $M$ or its inverse is a $Z$-matrix, i.e. a real square matrix with non-positive off diagonal elements. A number of authors have considered the special case of the linear complementarity problem under the restriction that $M$ is a $Z$-matrix. Chandrasekharan \cite{pch} considered $Z$-matrix solving a sequence of linear inequalities. Lemke's algorithm is a well known technique for solving linear complementarity problem \cite{cps}. Mangasarian \cite{mangasarian} showed that the following linear program
\be \label{equationz}
\begin{array}{ll}
	\text{minimize} & p^Tu\\
	\text{subject to} & q + Mu \geq 0,\\
	& u \geq 0 
\end{array}
\ee
for an easily determined $p \in R^n$ solves the linear complementarity problem for a number of special cases specially when $M$ is a $Z$-matrix. Mangasarian \cite{mangasarian} proved that least element of the polyhedral set $\{u: q + Mu \geq 0, u \geq 0\}$ in the sense of Cottle-Veinott  can be obtained by a single linear program. It is well known that the quadratic programming problem $$\begin{array}{ll}
\text{minimize} & q^Tu + \frac{1}{2} u^TMu\\
\text{subject to} & u \geq 0
\end{array}$$
can be formulated as a linear complementarity problem when $M$ is symmetric positive semidefinite. Mangasarian showed that this problem can be solved using single linear program if $M$ is a $Z$-matrix. Hidden $Z$-matrices are the extension of $Z$-matrices. A matrix $M$ is said to be a hidden $Z$-matrix if $\exists$ two $Z$-matrices $X$ and $Y$ such that
\begin{enumerate}[(i)]
		\item $MX = Y$
	\item $r^TX + s^TY > 0,$ for some $r, s \geq 0.$
\end{enumerate}
For details, see \cite{jana2019hidden}, \cite{jana2021more}.
In this paper we introduce block triangular $K$-matrix and hidden block triangular $K$-matrix. We call these two classes collectively as $K$-type block matrix. We discuss the class of $K$-type block matrices in solution aspects for linear complementarity problem.\\
 The paper is organized as follows. Section 2 presents some basic notations, definitions and results. In section 3, we establish some results of these two matrix classes. We show that a linear complementarity problem with block triangular $K$-matrix and hidden block triangular $K$-matrix can be solved using linear programming problem.
\section{Preliminaries}
\noindent We denote the $n$ dimensional real space by $R^n.$ $R^n_+$ denotes the nonnegative orthant of $R^n.$ We consider vectors and matrices with real entries. Any vector $x\in R^{n}$ is a column vector and  $x^{T}$ denotes the row transpose of $x.$ $e$ denotes the vector of all $1.$ A matrix is said to be nonnegative or $M \geq 0$ if $m_{ij} \geq 0 \ \forall \ i,j.$ A matrix is said to be positive if $m_{ij} > 0 \ \forall \ i,j.$ Let $M$ and $N$ be two matrices with $M \geq N,$ then $M-N \geq 0.$  If $M$ is a matrix of order $n,$ $\al \subseteq \{1, 2, \cdots, n\}$ and $\bar{\al} \subseteq \{1, 2, \cdots, n\} \setminus \al$ then $M_{\al \bar{\al}}$ denotes the submatrix of $M$ consisting of only the rows and columns of $M$ whose indices are in $\al$ and $\bar{\al}$ respectively. $M_{\al \al}$ is called a principal submatrix of M and det$(M_{\al \al})$ is called a principal minor of $M.$ Given a matrix $M \in R^{n \times n}$ and a vector $q \in R^n,$ we define the feasible set FEA$(M, q)$ $= \{z \in R^n : z \geq 0, q + Mz \geq 0\}$ and the solution set of LCP$(M, q)$ by SOL$(M, q)$ $=\{z \in \text{FEA}(M, q) : z^T(q + Mz) = 0\}.$

We state the results of two person matrix games in linear system with complementary conditions due to von Neumann \cite{von} and Kaplansky \cite{kaplansky}. The results say that there exist $\bar{x} \in R^m, \bar{y} \in R^n$ and $v \in R$ such that 
\begin{center}
	$\sum_{i=1}^{m} \bar{x}_i a_{ij} \leq v, \; \forall\; j = 1, 2, \cdots, n,$\\
	$\sum_{j=1}^{n} \bar{y}_j a_{ij} \geq v, \; \forall\; i = 1, 2, \cdots, m.$
\end{center}
The strategies $(\bar{x}, \bar{y})$ are said to be optimal strategies for player I and player II and $v$ is said to be minimax value of game.
We write $v(A)$ to denote the value of the game corresponding to the payoff matrix $A.$ The value of the game, $v(A)$ is positive(nonnegative) if there exists a $0 \neq x \geq 0$ such that $Ax > 0\;(Ax \geq 0).$ Similarly, $v(A)$ is negative(nonpositive) if there exists a $0 \neq y \geq 0$ such that $y^TA < 0\;(y^TA \leq 0).$ 

A matrix $M \in R^{n \times n}$ is said to be\\
$-$ $PSD$-matrix if $x^TMx \geq 0 \ \forall \ 0 \neq x \in R^n.$\\
$-$ $P\,(P_{0})$-matrix if all its principal minors are positive (nonnegative). \\
 $-$ $S$-matrix \cite{pang} if there exists a vector $x>0$ such that $Mx>0$ and $\bar{S}$-matrix if all its principal submatrices are $S$-matrix.\\
$-$ $Z$-matrix if off-diagonal elements are all non-positive and $K\,(K_0)$-matrix if it is a $Z$-matrix as well as $P\,(P_0)$-matrix.\\
$-$ $Q$-matrix if for every $q,$ LCP$(M, q)$ has at least one solution. \\
$-$ $Q_0$-matrix if for FEA$(q, A) \neq \emptyset$$\implies$SOL$(q, A) \neq \emptyset.$ 
\vsp
Now we give some definitions, lemmas, theorems which will be required for discussion in the next section. 
\begin{lemma}\cite{cps}\label{pptt}
	If $A$ is a $P$-matrix, then $A^T$ is also $P$-matrix.
\end{lemma}
\begin{lemma}
	Let $A$ be a $P$-matrix.  Then $v(A)>0.$
\end{lemma}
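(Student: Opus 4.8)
The plan is to argue by contradiction, exploiting the game-theoretic characterization of the value recalled just above the statement together with the fact that the transpose of a $P$-matrix is again a $P$-matrix (Lemma \ref{pptt}). Since the minimax theorem of von Neumann guarantees that $v(A)$ is a well-defined real number, it suffices to rule out the case $v(A) \le 0$, because the only remaining possibility is then $v(A) > 0$.

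First I would suppose, for contradiction, that $v(A) \le 0$. By the recalled characterization of a nonpositive value, there exists a vector $0 \neq y \geq 0$ with $y^T A \le 0$, equivalently $A^T y \le 0$. The task is then to show that such a $y$ cannot exist when $A$ is a $P$-matrix. Here I would use that $A^T$ is a $P$-matrix by Lemma \ref{pptt}, so the obstruction can be phrased entirely in terms of $A^T$ acting on the nonnegative vector $y$.

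The key step is to invoke the sign non-reversal property of $P$-matrices: a matrix $B$ is a $P$-matrix if and only if for every nonzero vector $u$ there is an index $k$ with $u_k (Bu)_k > 0$ (see \cite{cps}). Applying this to $B = A^T$ and to the nonzero vector $y$, I obtain an index $k$ with $y_k (A^T y)_k > 0$. Since $y \geq 0$ forces $y_k > 0$, this yields $(A^T y)_k > 0$, which contradicts $A^T y \le 0$. Hence no such $y$ exists, $v(A) \le 0$ is impossible, and therefore $v(A) > 0$.

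The only genuine obstacle is supplying the characterization of $P$-matrices that is actually used: the principal-minor definition given in the Preliminaries is awkward to apply directly, so the argument hinges on the equivalent sign non-reversal condition. I would either cite it from \cite{cps}, or, if a self-contained derivation is preferred, reduce to the principal submatrix $A^T_{\al\al}$ indexed by the support $\al = \{i : y_i > 0\}$; this submatrix is again a $P$-matrix (its principal minors are principal minors of $A^T$), and the restricted relation becomes $A^T_{\al\al}\, y_\al \le 0$ with $y_\al > 0$, so finishing the proof amounts to the statement that a $P$-matrix cannot send a strictly positive vector to a nonpositive one, which is precisely the positive case of sign non-reversal.
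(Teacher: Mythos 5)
The paper states this lemma without any proof, so there is no internal argument to compare against; your proof has to stand on its own, and in substance it does. The sign non-reversal property you invoke does not even need an outside citation: it is available inside the paper as Theorem \ref{pvp} (the equivalence of (i) and (ii)), and together with Lemma \ref{pptt} it gives exactly the contradiction you describe --- a nonzero $y \ge 0$ with $A^T y \le 0$ is impossible when $A^T$ is a $P$-matrix, since some index $k$ satisfies $y_k (A^T y)_k > 0$, forcing $y_k > 0$ and $(A^T y)_k > 0$. Your fallback via the support $\al = \{i : y_i > 0\}$ is also sound, since $(A^Ty)_\al = A^T_{\al\al} y_\al$ when $y_{\bar\al}=0$.

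The one step that needs patching is where you say the existence of $y$ follows ``by the recalled characterization of a nonpositive value.'' The preliminaries literally state only the implication in the other direction: \emph{if} there exists $0 \neq y \ge 0$ with $y^T A \le 0$, \emph{then} $v(A) \le 0$. Your contradiction needs the converse, and that converse requires a (one-line) argument via the minimax theorem rather than a citation: by the von Neumann--Kaplansky statement recalled in the paper, player I has an optimal strategy $\bar{x}$ with $\sum_i \bar{x}_i a_{ij} \le v(A)$ for all $j$, i.e.\ $A^T \bar{x} \le v(A)\, e$; if $v(A) \le 0$, then $\bar{x}$ is a nonzero nonnegative vector with $\bar{x}^T A \le 0$, which is the required witness. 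Once this is inserted, your proof closes. In fact, with $\bar{x}$ in hand the contradiction is unnecessary: apply sign non-reversal of $A^T$ directly to $\bar{x}$ to get an index $k$ with $(A^T \bar{x})_k > 0$, and then $v(A) \ge (A^T \bar{x})_k > 0$ immediately.
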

\begin{defn}\cite{cps}
	A subset $S$ of $R^n$ is called a meet semi-sublattice(under the componentwise ordering of $R^n$) if for any two vectors $x$ and $y$ in $S$, their meet, the vector $z=\min(x,y)$  belongs to $S$.
	\end{defn}
\begin{defn}\cite{fiedler}
	The spectral radius $\sigma(M) $ of $M$ is defined as the maximum of the  moduli $|\lambda|$ of all proper values $\lambda$ of $M$.
\end{defn}
 
\begin{lemma}\cite{fiedler}
	Let $M$ be a nonnegative matrix. Then there exists a proper value $p(M)$ of $M$, the Perron root of $M$, such that $p(M)\geq 0$ and  $|\lambda|\leq p(M)$ for every proper value $\lambda$ of $M.$ If \ $0 \leq M \leq N$ then \ $p(M) \leq p(N).$ Moreover, if $M$ is irreducible, the Perron-Frobenius root $p(M)$ is positive, simple and the corresponding proper value may be chosen positive. According to the  Perron-Frobenius theorem, we have $\sigma(M)=p(M)$ for nonnegative matrices.
\end{lemma}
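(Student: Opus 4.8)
Since this lemma is the classical Perron--Frobenius theorem in the packaging used by Fiedler, the most honest route is simply to invoke \cite{fiedler}; nevertheless, here is the self-contained argument I would give. The plan is to base every assertion on the Collatz--Wielandt characterization
\be
p(M) \,=\, \max_{0 \neq x \ge 0}\ \min_{i:\, x_i > 0} \frac{(Mx)_i}{x_i},
\ee
and to treat existence of the Perron root, the spectral-radius identity, monotonicity, and the irreducible refinement as four separate steps, each short once this formula is available.

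First I would establish that the right-hand side is attained at a genuine eigenvector rather than merely as a supremum. Writing $r(x) = \min_{i:\, x_i>0}(Mx)_i/x_i$ on the compact simplex $\Delta = \{x \ge 0 : e^Tx = 1\}$, the function $r$ is bounded above and upper semicontinuous because $M \ge 0$, so it attains its maximum value $p$ at some $x^\ast \in \Delta$. I would then argue $Mx^\ast = p\,x^\ast$: if $(Mx^\ast)_k > p\,x^\ast_k$ held for some coordinate $k$, a small increase of $x^\ast_k$ would raise the binding ratios without creating a smaller one, contradicting the maximality of $p$. This makes $p = p(M)$ an eigenvalue with nonnegative eigenvector $x^\ast$, and $p \ge 0$ follows from $Mx^\ast \ge 0$.

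Next, for the spectral-radius identity I would take an arbitrary (possibly complex) eigenpair $(\lambda, v)$ and pass to moduli: since $M \ge 0$, the triangle inequality gives $|\lambda|\,|v| = |Mv| \le M|v|$ componentwise, so the nonnegative vector $|v|$ feeds into the Collatz--Wielandt formula to yield $|\lambda| \le p(M)$. Combined with the previous step this shows $p(M)$ is the largest modulus among all proper values, i.e.\ $\sigma(M) = p(M)$. Monotonicity is then immediate: if $0 \le M \le N$ and $Mx^\ast = p(M)x^\ast$ with $x^\ast \ge 0$, then $Nx^\ast \ge Mx^\ast = p(M)x^\ast$, and using $x^\ast$ as a test vector in the characterization of $p(N)$ gives $p(N) \ge p(M)$.

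Finally, for the irreducible refinement I would use the standard fact that an irreducible $M \ge 0$ satisfies $(I+M)^{n-1} > 0$, reducing the claim to Perron's theorem for strictly positive matrices; that theorem supplies a strictly positive eigenvector, the strict positivity $p(M) > 0$, and the algebraic simplicity of $p(M)$. I expect the two delicate points to be (i) the perturbation argument in the existence step, which is what upgrades the maximizer from a supremum to an actual eigenvector, and (ii) the simplicity assertion in the irreducible case, where the strict positivity of $(I+M)^{n-1}$ does the real work; everything else is routine bookkeeping once the Collatz--Wielandt formula is in hand.
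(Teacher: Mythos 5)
The paper itself offers no proof of this lemma: it is quoted from Fiedler's book as classical background, so there is no internal argument to compare against, and your proposal must stand on its own. It does not quite do so, because there is a genuine gap in your existence step. Your perturbation argument --- ``if $(Mx^\ast)_k > p\,x^\ast_k$, a small increase of $x^\ast_k$ raises the binding ratios without creating a smaller one'' --- is valid when $M$ is strictly positive, since then increasing $x_k$ strictly increases the ratio at \emph{every} other index (each $m_{ik}>0$), so the binding ratios genuinely move up and the minimum exceeds $p$, a contradiction. But the lemma concerns a general nonnegative $M$, and for reducible $M$ a maximizer of the Collatz--Wielandt function need not be an eigenvector at all. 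Take $M=\left[\begin{array}{rr} 0 & 1\\ 0 & 0 \end{array}\right]$ and $x^\ast=(0,1)^T$: then $r(x^\ast)=0=p(M)$, so $x^\ast$ is a maximizer, yet $Mx^\ast=(1,0)^T\neq 0\cdot x^\ast$. Running your perturbation, $x=(\delta,1)^T$ has ratios $1/\delta$ and $0$, whose minimum is still $0$, so no contradiction ever materializes. The conclusion of the step (that $p(M)$ is an eigenvalue) is true, but it cannot be reached by this local argument in the reducible case.

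The standard repair, and in substance what Fiedler's cited development does, is a two-stage argument: first prove the statement for positive matrices (or for irreducible ones via $(I+M)^{n-1}>0$), where your perturbation --- or the cleaner variant of applying $M$ once more to the slack vector $Mx^\ast-p\,x^\ast$ to force a strict inequality --- is legitimate; then treat a general nonnegative $M$ as the limit of $M+\epsilon J$ with $J$ the all-ones matrix as $\epsilon\downarrow 0$, extract a convergent subsequence of the normalized Perron vectors of $M+\epsilon J$, and use continuity of the spectrum to conclude that the limit of the Perron roots is an eigenvalue of $M$ dominating all moduli; monotonicity of $p$ under $0\le M\le N$ also drops out of this limiting setup. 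Your remaining steps --- the upper semicontinuity of $r$ on the simplex, the moduli bound via $|\lambda|\,|v|\le M|v|$, monotonicity by testing the Perron vector of $M$ against $N$, and the reduction of the irreducible refinement to Perron's theorem through $(I+M)^{n-1}>0$ --- are sound once the existence step is established in this order, though the simplicity assertion still requires relating the eigenvalue $(1+\lambda)^{n-1}$ of $(I+M)^{n-1}$ back to $\lambda$ with some care.
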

\begin{defn}
	A matrix $W$ is said to have dominant principal diagonal if $|w_{ii}| >\sum_{k\neq i}|w_{ik}|$ for each $i.$
\end{defn}	    

\begin{lemma}\cite{fiedler}
	If $W$ is a matrix with dominant principal diagonal, then $\sigma(I-H^{-1}W)<1,$ where $H$ is the diagonal of $W$.
\end{lemma}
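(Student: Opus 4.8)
The plan is to exploit the explicit entrywise form of the matrix $B := I - H^{-1}W$ and then to bound its spectral radius by an induced matrix norm. First I would record that the dominant diagonal hypothesis forces $|w_{ii}| > \sum_{k \neq i}|w_{ik}| \geq 0$ for every $i$, so each diagonal entry $w_{ii}$ is nonzero; hence $H = \mathrm{diag}(w_{11}, \dots, w_{nn})$ is invertible and $B$ is well defined.

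Next I would compute the entries of $B$ directly. Since $(H^{-1}W)_{ij} = w_{ij}/w_{ii}$, the diagonal entries of $H^{-1}W$ are all equal to $1$, so $B_{ii} = 0$ for each $i$, while the off-diagonal entries are $B_{ij} = -\,w_{ij}/w_{ii}$ for $i \neq j$. The crucial consequence is the row estimate
\[
\sum_{j=1}^{n} |B_{ij}| \;=\; \sum_{j \neq i} \frac{|w_{ij}|}{|w_{ii}|} \;=\; \frac{1}{|w_{ii}|}\sum_{j \neq i}|w_{ij}| \;<\; 1,
\]
where the strict inequality is precisely the dominant diagonal condition applied in row $i$.

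Finally I would pass from this row-sum estimate to the spectral radius. Taking the maximum over $i$ gives $\|B\|_{\infty} = \max_i \sum_j |B_{ij}| < 1$, since the maximum of finitely many numbers each strictly less than $1$ is itself strictly less than $1$. Because the spectral radius is dominated by every induced matrix norm, $\sigma(B) \leq \|B\|_{\infty} < 1$, which is the asserted conclusion. Equivalently, one may invoke Gershgorin's theorem: every proper value of $B$ lies in a disc centred at $B_{ii}=0$ with radius $\sum_{j\neq i}|B_{ij}| < 1$, and therefore has modulus strictly below $1$.

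The argument is essentially computational, so there is no deep obstacle; the only point that needs a little care is the \emph{strictness} of the final bound. I expect the main thing to get right is that strict inequality survives the passage to the spectral radius, which is exactly why I would route the conclusion through the maximum absolute row-sum norm (a finite maximum) or through the Gershgorin discs, rather than through a supremum or limiting argument in which strictness could be lost.
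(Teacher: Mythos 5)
Your proof is correct. Note that the paper itself offers no proof of this lemma at all: it is quoted verbatim from the cited reference (Fiedler), so there is no internal argument to compare against. Your row-sum computation is sound: dominance gives $|w_{ii}|>\sum_{k\neq i}|w_{ik}|\geq 0$, so $H$ is invertible; the matrix $B=I-H^{-1}W$ has zero diagonal and off-diagonal entries $-w_{ij}/w_{ii}$, so each absolute row sum is $\frac{1}{|w_{ii}|}\sum_{j\neq i}|w_{ij}|<1$; and since the spectral radius is dominated by the induced $\infty$-norm (a finite maximum, so strictness survives), $\sigma(B)<1$. This matches the classical treatment in spirit: Fiedler's own route is to take an eigenvalue $\lambda$ of $B$ with $|\lambda|\geq 1$ and observe that $(W+(\lambda-1)H)x=0$ for the eigenvector $x$, while $W+(\lambda-1)H$ has diagonal entries $\lambda w_{ii}$ and hence a dominant diagonal, so it is nonsingular by the L\'{e}vy--Desplanques theorem --- a contradiction. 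That argument and your Gershgorin/norm argument are essentially equivalent in content (Gershgorin and L\'{e}vy--Desplanques are two faces of the same estimate), so your proposal is a legitimate, self-contained proof of the cited result.
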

\begin{theorem}\cite{fiedler}\label{pvp}
	The following four properties of a matrix are equivalent:\\
	(i) All principal minors of $M$ are positive.\\
	(ii) To every vector $x \neq 0$ there exists an index $k$ such that $x_ky_k >0$ where $y=Mx$.\\
	(iii)To every vector $x \neq 0$ there exists a diagonal matrix $D_x$ with positive diagonal elements such that the inner product $(Mx,D_xx) >0.$\\
	(iv)To every vector $x \neq 0$ there exists a diagonal matrix $H_x \geq 0$ such that the inner product $(Mx,H_xx) >0.$\\
	(v)Every real proper value of $M$ as well as of each principal minor of $M$ is positive.	
\end{theorem}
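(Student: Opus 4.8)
The plan is to treat the five conditions as two easily-handled clusters, $\{$(ii),(iii),(iv)$\}$ and $\{$(i),(v)$\}$, and then to build the single hard bridge between them. The cluster (ii)$\Leftrightarrow$(iii)$\Leftrightarrow$(iv) is elementary and uses none of the minor hypotheses: (iii)$\Rightarrow$(iv) is immediate because a positive diagonal matrix is in particular nonnegative; (iv)$\Rightarrow$(ii) holds because $\sum_k (H_x)_{kk}\,x_k y_k$ (with $y=Mx$) has nonnegative coefficients, so it can be positive only if some single product $x_k y_k$ is positive; and (ii)$\Rightarrow$(iii) follows by taking for $D_x$ a positive diagonal that loads an overwhelming weight on the index $k$ supplied by (ii) and weight $1$ on the others. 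I would record these three implications first.

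The device that drives the bridge (i)$\Leftrightarrow$(ii), and which I expect to be the crux, is the diagonal-shift expansion
\[
\det(M+D)=\sum_{S\subseteq\{1,\dots,n\}}\Big(\prod_{i\in S}d_i\Big)\det\!\big(M_{\bar S\bar S}\big),
\]
valid for every diagonal $D=\mathrm{diag}(d_1,\dots,d_n)$, where $\bar S$ is the complement of $S$ and empty products/minors equal $1$; it is obtained by expanding the determinant multilinearly in the columns. For (i)$\Rightarrow$(ii), suppose some $x\neq0$ has $x_k y_k\le0$ for all $k$ with $y=Mx$. Restricting to the support $\al=\{k:x_k\neq0\}$ replaces $M$ by the principal submatrix $M_{\al\al}$, whose principal minors are still positive, and replaces $x$ by a vector with no zero entry; then $d_i:=-y_i/x_i\ge0$ defines a nonnegative diagonal $D$ with $(M_{\al\al}+D)\,x|_{\al}=0$, so $M_{\al\al}+D$ is singular. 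But in the expansion every summand is a nonnegative multiple of a positive principal minor, so $\det(M_{\al\al}+D)\ge\det M_{\al\al}>0$, a contradiction. For (ii)$\Rightarrow$(i) I would argue contrapositively: pick a principal submatrix $B$ of least order with $\det B\le0$, so all proper principal minors of $B$ are positive; if $\det B=0$ a kernel vector is sign-reversed, while if $\det B<0$ then $(B^{-1})_{kk}=\det(B_{\bar k\bar k})/\det B<0$ for every $k$ by Cramer's rule, and $B^{-1}e_j$ is sign-reversed because $B(B^{-1}e_j)=e_j$; padding with zeros outside the index set of $B$ contradicts (ii) for $M$.

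It remains to attach (v), which I would do through (i) using the same expansion in its special case $D=tI$, namely $\det(tI+M)=\sum_k t^{\,n-k}E_k(M)$ with $E_k(M)$ the sum of the $k\times k$ principal minors. Under (i) each $E_k(M)>0$, so $\det(tI+M)>0$ for all $t\ge0$; applied to $M$ and to each of its principal submatrices this forbids any nonpositive real eigenvalue and yields (v). Conversely, for (v)$\Rightarrow$(i) I would write each principal minor as the product of the eigenvalues of the corresponding principal submatrix: the non-real eigenvalues pair into conjugates contributing factors $|\lambda|^2>0$, and the real eigenvalues are positive by (v), so the product is positive. The genuine obstacle is the step (i)$\Rightarrow$(ii); once the diagonal-shift identity is available it reduces to the single clean fact that a matrix with all principal minors positive cannot be made singular by a nonnegative diagonal perturbation, and the remaining implications are bookkeeping.
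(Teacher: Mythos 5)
Your proof is correct, but a direct comparison with the paper is not possible: the paper does not prove this theorem at all --- it is imported verbatim from the cited reference (Fiedler--Pt\'{a}k) as background material, with the citation standing in for the proof. What you have written is a correct, self-contained reconstruction of the classical Fiedler--Pt\'{a}k argument. The cluster (ii)$\Leftrightarrow$(iii)$\Leftrightarrow$(iv) is handled exactly as it should be (the weighting argument for (ii)$\Rightarrow$(iii) and the nonnegative-combination argument for (iv)$\Rightarrow$(ii) are both airtight). The crux, (i)$\Leftrightarrow$(ii), is carried correctly by your diagonal-shift expansion $\det(M+D)=\sum_{S}\bigl(\prod_{i\in S}d_i\bigr)\det(M_{\bar S\bar S})$: the restriction to the support of $x$ legitimately replaces $M$ by $M_{\al\al}$ because $y|_{\al}=M_{\al\al}x|_{\al}$ when $x$ vanishes off $\al$, the perturbation $d_i=-y_i/x_i\ge 0$ is well defined there, and the expansion then forces $\det(M_{\al\al}+D)\ge\det M_{\al\al}>0$, contradicting singularity; the converse via a least-order offending principal submatrix and Cramer's rule (including the degenerate $\det B=0$ case) is also complete, and it correctly covers the $1\times 1$ case since the empty minor is $1$. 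Attaching (v) through $\det(tI+M)=\sum_k t^{\,n-k}E_k(M)$ and, conversely, through the pairing of complex-conjugate eigenvalues is standard and sound. The only blemishes are cosmetic: the theorem statement's ``four properties'' is the paper's own typo (five are listed, and you prove all five equivalent), and your phrase ``sign-reversed'' is used informally but unambiguously. In short: the paper cites, you prove, and your proof is the canonical one.
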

\begin{lemma}\cite{cps}\label{sub}
	If $F$ is a nonempty meet semi-sublattice that is closed and bounded below, then $F$ has a least element.
\end{lemma}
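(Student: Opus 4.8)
The plan is to construct the least element explicitly as the vector of coordinatewise infima and then verify, using the two topological/order hypotheses together, that this candidate actually lies in $F$. First I would define, for each coordinate $i \in \{1, \dots, n\}$, the quantity $m_i = \inf\{x_i : x \in F\}$. Since $F$ is nonempty and bounded below, each $m_i$ is a finite real number, so the candidate $m = (m_1, \dots, m_n)$ is well defined. By the very definition of the infimum, every $x \in F$ satisfies $x \geq m$ componentwise; thus $m$ is automatically a lower bound for $F$, and the only remaining task is to show $m \in F$.

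The key step, where the meet semi-sublattice structure enters, is to produce a single sequence in $F$ converging to $m$. For each fixed $i$ I would choose a sequence $\{a^{(i,k)}\}_{k \geq 1} \subseteq F$ with $a^{(i,k)}_i \to m_i$ as $k \to \infty$. For each $k$, I then form the meet $z^{(k)} = \min\big(a^{(1,k)}, a^{(2,k)}, \dots, a^{(n,k)}\big)$, taken componentwise over the $n$ chosen points. Because $F$ is a meet semi-sublattice, it is closed under pairwise meets, hence by induction under finite meets, so $z^{(k)} \in F$ for every $k$.

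Next I would check that $z^{(k)} \to m$. On the one hand, $z^{(k)} \in F$ forces $z^{(k)}_i \geq m_i$ for every $i$ and $k$. On the other hand, the $i$-th coordinate of the meet satisfies $z^{(k)}_i \leq a^{(i,k)}_i$, and the right-hand side tends to $m_i$. Squeezing $m_i \leq z^{(k)}_i \leq a^{(i,k)}_i$ gives $z^{(k)}_i \to m_i$ for each $i$, i.e. $z^{(k)} \to m$ in $R^n$. Since $F$ is closed and $\{z^{(k)}\} \subseteq F$, the limit $m$ belongs to $F$. Combined with the lower-bound property already observed, $m$ is the least element of $F$.

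I expect the main obstacle to be exactly the membership $m \in F$: neither hypothesis alone suffices, and the argument must interlock them. The sublattice property is what lets me drive all $n$ coordinates down toward their infima \emph{simultaneously} (a single approximating point per index would only control one coordinate), while topological closedness is what lets me pass to the limit. The only technical care needed is the coordinatewise squeeze above; everything else is routine.
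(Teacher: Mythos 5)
Your proof is correct, but there is nothing in the paper to compare it against: the paper states this lemma without proof, citing it directly from the reference \cite{cps}, and then uses it as a black box in the least-element theorem for block triangular $K$-matrices. Your argument is the standard self-contained proof of this fact, and every step checks out: boundedness below makes each coordinatewise infimum $m_i$ finite; the meet semi-sublattice property, extended from pairwise to finite meets by induction, puts each $z^{(k)} = \min\big(a^{(1,k)},\dots,a^{(n,k)}\big)$ in $F$; the squeeze $m_i \leq z^{(k)}_i \leq a^{(i,k)}_i$ forces $z^{(k)} \to m$; and closedness then yields $m \in F$, so the lower bound $m$ actually belongs to $F$ and is its least element. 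Your closing remark is also on point: the meets are exactly what let you drive all $n$ coordinates toward their infima along a \emph{single} sequence, which is the step that would fail if one only had closedness, while closedness alone is what converts convergence into membership.
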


\begin{lemma}\cite{mangasarianlinear}\label{lemm}
	If $z$ solves the linear program $\min p^Tz $ subject to $Mz+q \geq 0, z \geq 0$ and if the corresponding optimal dual variable $y$ satisfies $(I-M^T)y+p>0,$ then $z$ solves the linear complementarity problem LCP$(M,q).$
\end{lemma}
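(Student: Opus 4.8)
The plan is to exploit linear programming duality together with complementary slackness. First I would write down the dual of the given program. Writing the primal constraint $Mz + q \geq 0$ as $Mz \geq -q$ with multiplier $y \geq 0$, the dual reads $\max -q^T y$ subject to $M^T y \leq p$, $y \geq 0$; here $y$ is precisely the optimal dual variable named in the hypothesis. Since $z$ is primal optimal and $y$ is the corresponding dual optimal solution, strong duality forces the two complementary slackness relations $y^T(Mz + q) = 0$ and $z^T(p - M^T y) = 0$. Because all vectors involved are sign-constrained, these hold coordinatewise: $y_i(Mz+q)_i = 0$ and $z_i(p - M^T y)_i = 0$ for every $i$.

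Next I would observe that two of the three defining conditions of LCP$(M,q)$ are already supplied by primal feasibility. Setting $w := Mz + q$, feasibility of $z$ gives $w \geq 0$ and $z \geq 0$, so it only remains to prove the orthogonality $z^T w = 0$. Since $z \geq 0$ and $w \geq 0$, this quantity is a sum of nonnegative terms $z_i w_i$, and hence it suffices to show that each term vanishes.

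The crux is to read the hypothesis $(I - M^T)y + p > 0$ in the right form. Rewriting it as $y + (p - M^T y) > 0$, it says that for every index $i$ at least one of $y_i$ or $(p - M^T y)_i$ is strictly positive. I would then argue by cases on each coordinate $i$: if $(p - M^T y)_i > 0$, then $z_i(p - M^T y)_i = 0$ forces $z_i = 0$; otherwise $y_i > 0$, and then $y_i(Mz+q)_i = 0$ forces $w_i = (Mz+q)_i = 0$. Either way $z_i w_i = 0$. Summing over $i$ yields $z^T(Mz+q) = 0$, which together with $z \geq 0$ and $Mz + q \geq 0$ shows that $z$ solves LCP$(M,q)$ with associated vector $w$.

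I do not expect a serious obstacle here; the one step that requires care is the correct formation of the dual and the identification of $(I - M^T)y + p$ with $y + (p - M^T y)$. After that, the strict positivity hypothesis does exactly the work of forcing, in each coordinate, one of the two complementary slackness conditions to pin down either $z_i$ or $w_i$ to zero, which is the only nontrivial ingredient in the argument.
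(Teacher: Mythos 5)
Your proof is correct. Note that the paper states this lemma only as a cited result from Mangasarian (\cite{mangasarianlinear}) and gives no proof of its own; your argument --- LP duality, coordinatewise complementary slackness, and the observation that $(I-M^T)y+p>0$ means $y_i$ and $(p-M^Ty)_i$ cannot both vanish, so each product $z_i(Mz+q)_i$ is forced to zero --- is exactly the standard argument from the cited source, with no gaps.
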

\section{Main Results}

In this paper we introduce block triangular $K$-matrix and hidden block triangular $K$-matrix, which are defined as follows: A matrix $M \in R^{mn \times mn}$ is said to be a block triangular $K$-matrix if it is formed with block of $K$-matrices $M_{ij}\in R^{m \times m},$ either in upper triangular forms or in lower triangular forms. Here $i$ and $j$ varry from $1$ to $n.$ For block upper triangular form of $M,$ the blocks $M_{ij}= 0$ for $i<j$ and for block lower triangular form of $M,$ the blocks $M_{ij}= 0$ for $i>j.$ \\
 \begin{spacing}{1.5}
Consider $M=\left[\begin{array}{c|c|c} 
\begin{array}{rr}
1 & -1\\
-1.5 & 2\\
\end{array} & \begin{array}{rr}
0 & 0\\
0 & 0\\
\end{array} & \begin{array}{rr}
0 & 0\\
0 & 0\\
\end{array}\\
\hline
\begin{array}{rr}
3 & -1\\
-1 & 4\\
\end{array} & \begin{array}{rr}
1 & -1\\
-0.75 & 1\\
\end{array} & \begin{array}{rr}
0 & 0\\
0 & 0\\
\end{array}\\
\hline
\begin{array}{rr}
1 & -1\\
-0.5 & 1\\
\end{array} & \begin{array}{rr}
1 & -0.5\\
-0.5 & 1\\
\end{array} & \begin{array}{rr}
5 & -1\\
-10 & 6\\
\end{array}\\
\end{array}\right], $ \end{spacing} \NI which is a block triangular $K$-matrix.\\
 The matrix $N\in R^{mn\times mn}$ is said to be  hidden block triangular $K$-matrix if there exist two block triangular $K$-matrices $X$ and $Y$ such that $NX=Y.$ $N$ is formed with block matrices either in upper triangular forms or in lower triangular forms. For block upper triangular form of $N,$ the blocks $N_{ij}= 0$ for $i<j$ and $X, Y$ are formed with $K$ matrices in upper triangular form. Similarly for block lower triangular form of $N,$ the blocks $N_{ij}= 0$ for $i>j$ and $X, Y$ are formed with $K$ matrices in lower triangular form.\\
\begin{spacing}{1.5}
Consider $N=$$\left[\begin{array}{c|c} 
\begin{array}{rr}
-1 & -1\\
5 & 4\\
\end{array} & \begin{array}{rr}
0 & 0\\
0 & 0\\
\end{array} \\
\hline
\begin{array}{rr}
-4.5 & -3\\
4 & 3.875\\
\end{array} & \begin{array}{rr}
1 & 0.5\\
-0.25 & 0.3125\\
\end{array} \\
\end{array}\right], $ \\
\vsp \NI
 $X=\left[\begin{array}{c|c} 
\begin{array}{rr}
2 & -1\\
-3 & 2\\
\end{array} & \begin{array}{rr}
0 & 0\\
0 & 0\\
\end{array} \\
\hline
\begin{array}{rr}
3 & 0\\
-2 &1\\
\end{array} & \begin{array}{rr}
4 & -1\\
0 & 4\\
\end{array} \\
\end{array}\right]$
and $Y=\left[\begin{array}{c|c} 
\begin{array}{rr}
1 & -1\\
-2 & 3\\
\end{array} & \begin{array}{rr}
0 & 0\\
0 & 0\\
\end{array} \\
\hline
\begin{array}{rr}
2 & -1\\
0 & 1\\
\end{array} & \begin{array}{rr}
4 & 0\\
-1 & 1\\
\end{array} \\	
\end{array}\right],$  \end{spacing} \NI such that $NX=Y.$ Then $N$ is a hidden block triangular $K$-matrix.
\begin{theorem}\label{p}
	Let $M$ be a block triangular $K$-matrix. Then LCP$(M,q)$ is processable by Lemke's algorithm.
\end{theorem}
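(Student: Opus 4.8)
The plan is to establish the theorem in two stages: first prove that every block triangular $K$-matrix is a $P$-matrix, and then invoke the classical behaviour of Lemke's algorithm on $P$-matrices. For the structural step I would argue straight from the definition, using that the determinant of a block triangular matrix factors through its diagonal blocks. Fix any $\al\seq\{1,\dots,mn\}$ and, for each block index $i=1,\dots,n$, let $\al_i$ be the set of (local) indices of $\al$ falling inside the $i$-th block. Since $M$ is block lower triangular (the upper triangular case being symmetric), the principal submatrix $M_{\al\al}$ is itself block lower triangular with square diagonal blocks $(M_{ii})_{\al_i\al_i}$, whence $\det(M_{\al\al})=\prod_{i=1}^{n}\det\big((M_{ii})_{\al_i\al_i}\big)$, an empty factor being read as $1$. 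Every factor on the right is a principal minor of the $K$-matrix $M_{ii}$ and is therefore positive, since a $K$-matrix is in particular a $P$-matrix. Thus $\det(M_{\al\al})>0$ for every $\al$, so all principal minors of $M$ are positive and $M\in P$. (Alternatively one may use characterization (ii) of Theorem \ref{pvp}: given $x\neq0$, take the first nonzero block $x^{(i_0)}$; block triangularity gives $(Mx)^{(i_0)}=M_{i_0 i_0}x^{(i_0)}$, and the $P$-property of $M_{i_0 i_0}$ produces a coordinate $k$ with $x_k(Mx)_k>0$.)

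For the processability step, being a $P$-matrix guarantees that LCP$(M,q)$ has a unique solution for every $q$; in particular LCP$(M,0)$ has only the trivial solution, so $M$ is an $R_0$-matrix. I would then recall the standard analysis of Lemke's method with a positive covering vector: the only way the almost-complementary path can fail to reach a complementary vertex is ray termination, and a secondary ray would furnish a nonzero $\baz\ge0$ with $M\baz\ge0$ and $\baz^T M\baz=0$, that is, a nontrivial solution of LCP$(M,0)$. This contradicts $M\in R_0$. Hence no secondary ray exists, the path terminates at a complementary point, and that point is the unique solution of LCP$(M,q)$; this is exactly what it means for $M$ to be processable by Lemke's algorithm \cite{cps}.

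The structural step is routine once the block triangular form is exploited, and the $K$-matrix hypothesis is used only on the diagonal blocks (the off-diagonal blocks play no role here). The genuine content sits in the processability step, namely the exclusion of ray termination; the key mechanism is the implication $P\Rightarrow R_0$, which converts a hypothetical secondary ray into a forbidden nonzero solution of the homogeneous LCP. I would therefore devote most care to verifying that the secondary-ray direction produced by Lemke's scheme indeed satisfies $\baz\ge0$, $M\baz\ge0$, $\baz^T M\baz=0$, so that the $R_0$ property can be applied cleanly.
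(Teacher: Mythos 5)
Your first step is correct and, in substance, is the paper's: the paper proves $M\in P$ and then concludes Lemke processability by citation. The paper's route to the $P$-property is the sign-reversal characterization (condition (ii) of Theorem \ref{pvp}), applied inductively block by block ($z_1=0$, then $z_2=0$, etc.); your primary argument via the factorization $\det(M_{\al\al})=\prod_i\det\bigl((M_{ii})_{\al_i\al_i}\bigr)$ is a valid alternative, and your parenthetical ``first nonzero block'' argument is essentially the paper's proof in contrapositive form. Both correctly use the $K$-hypothesis only on the diagonal blocks.

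The gap is in the ray-termination step, exactly where you said you would concentrate. A secondary ray of Lemke's method with covering vector $d>0$ has direction $(\bar z_0,\bar z,\bar w)\ge 0$ with $\bar z\neq 0$, $\bar w=d\bar z_0+M\bar z\ge 0$ and $\bar z^T\bar w=0$; the direction component $\bar z_0$ of the artificial variable need \emph{not} vanish. So what the ray actually gives you is $\bar z_i(M\bar z)_i=-d_i\bar z_0\bar z_i\le 0$ for every $i$, not ``$M\bar z\ge 0$ and $\bar z^TM\bar z=0$.'' Your reduction to a nonzero solution of LCP$(M,0)$, i.e.\ to the $R_0$ property, is valid only in the case $\bar z_0=0$. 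In the copositive-plus theory that case is forced by copositivity, but a $P$-matrix need not be copositive (e.g.\ $\left[\begin{smallmatrix}1 & -3\\ 0 & 1\end{smallmatrix}\right]$, since $(1,1)M(1,1)^T=-1$), so the case $\bar z_0>0$ cannot be dismissed, and $R_0$ alone is not one of the classes for which secondary rays are excluded. The repair is immediate and uses the stronger property you already established: since $\bar z\neq 0$ and $\bar z_i(M\bar z)_i\le 0$ for all $i$, condition (ii) of Theorem \ref{pvp} is violated, contradicting $M\in P$; hence no secondary ray exists. Alternatively, simply invoke the classical theorem that Lemke's algorithm computes the (unique) solution of LCP$(M,q)$ whenever $M$ is a $P$-matrix \cite{cps} --- which is precisely the level of detail at which the paper leaves this step.
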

\begin{proof}
	Let $M$ be  a block triangular $K$-matrix. Then $\exists \  z \in R^n$ such that $z_i(Mz)_i\leq 0 \ \forall i \implies (z_1)_i(M_{11}z_1)_i \leq 0 \ \forall i \implies z_1=0,$ as $M_{11}\in K;$ $(z_2)_i(M_{21}z_1+M_{22}z_2)_i \leq 0 \ \forall i \implies (z_2)_i(M_{22}z_2)_i \leq 0 \ \forall i \implies z_2=0,$ as $M_{22}\in K.$ In similar way $(z_n)_i(M_{n1}z_1+M_{n2}z_2+\cdots+M_{nn}z_n)_i \leq 0 \ \forall i \implies (z_n)_i(M_{nn}z_n)_i \leq 0 \ \forall i \implies z_n=0,$ as $M_{nn}\in K$ and $z_1=z_2=\cdots=z_{n-1}=0.$ Hence $z=0.$ So $M$ is a $P$-matrix. Therefore LCP$(M,q)$ is processable by Lemke's algorithm.
\end{proof}
\begin{remk}\cite{jana}
		Let $M$ be a block triangular $K$-matrix. Then LCP$(M,q)$ is solvable by criss-cross method.
\end{remk}
\begin{theorem}\label{lattice}
	If $M$ is a block triangular $K$-matrix and $q$ is an arbitrary vector, then the feasible region of LCP$(M,q)$ is a meet semi-sublattice.
\end{theorem}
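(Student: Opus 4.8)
The plan is to verify the defining closure property directly. Given $x,y\in\text{FEA}(M,q)$, set $z=\min(x,y)$, the componentwise meet, and show $z\in\text{FEA}(M,q)$. Since $x,y\ge 0$ we get $z\ge 0$ for free, so the entire content is the inequality $q+Mz\ge 0$, which I would establish row by row. Partition the indices into $I_x=\{i: z_i=x_i\le y_i\}$ and $I_y=\{i: z_i=y_i<x_i\}$, so that $z$ agrees with $x$ on $I_x$ and with $y$ on $I_y$. Fix a row $i\in I_x$ (the case $i\in I_y$ is symmetric, comparing against $My$). Writing $(Mz)_i-(Mx)_i=\sum_{j\ne i}m_{ij}(z_j-x_j)$ and noting $z_j-x_j\le 0$ for every $j$, the desired $(q+Mz)_i\ge (q+Mx)_i\ge 0$ follows as soon as $m_{ij}\le 0$ for each off-diagonal $j$ with $z_j<x_j$, i.e. for each $j\in I_y$ with $j\ne i$.

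This reduces the theorem to a sign condition on the off-diagonal entries of $M$, and this is exactly where the block structure must enter and where the main obstacle lies. For the diagonal blocks the condition is automatic: each $M_{kk}$ is a $K$-matrix, hence a $Z$-matrix, so all its off-diagonal entries are nonpositive. The difficulty is the off-diagonal blocks $M_{kl}$ with $l<k$ (in the lower-triangular case), which are themselves $K$-matrices and therefore carry strictly positive diagonal entries. These appear as positive off-diagonal entries of the global matrix $M$, coupling position $r$ of block $k$ to position $r$ of block $l$. A term $m_{ij}(z_j-x_j)$ with such a positive $m_{ij}$ and $j\in I_y$ is strictly negative, so comparison against $Mx$ alone breaks down and $M$ is not globally a $Z$-matrix.

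To try to push the argument through I would process the blocks from the top down, using block triangularity to decouple them. The first block is self-contained: its rows involve only $z^{(1)}$, and $q^{(1)}+M_{11}z^{(1)}\ge 0$ follows from the $Z$-matrix meet argument above applied inside the single $K$-matrix $M_{11}$. Inductively, assuming feasibility secured for blocks $1,\dots,k-1$, the rows of block $k$ read $q^{(k)}+\sum_{l<k}M_{kl}z^{(l)}+M_{kk}z^{(k)}\ge 0$, and the plan is to absorb the cross-terms $M_{kl}z^{(l)}$ using the active choice ($x^{(l)}$ or $y^{(l)}$) fixed at the lower blocks. I expect this final step to be the crux and to be genuinely delicate: because the off-diagonal blocks have positive diagonal entries, a cross-term $M_{kl}z^{(l)}$ can fall strictly below both $M_{kl}x^{(l)}$ and $M_{kl}y^{(l)}$ in a given coordinate, so the block-$k$ inequality need not inherit from those for $x$ and $y$. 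Controlling these signs — rather than the diagonal-block part, which is routine — is the heart of the matter, and any complete argument must explain why the triangular coupling cannot push the meet out of $\text{FEA}(M,q)$; pinning this down (or isolating whatever extra hypothesis on the off-diagonal blocks it requires) is the step I would spend the most effort on.
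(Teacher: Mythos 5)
Your proposal is not a proof: everything hinges on the inductive step you explicitly defer (absorbing the cross-terms $M_{kl}z^{(l)}$, $l<k$), and you never carry it out. Your instinct about that step, however, is exactly right, and in fact it cannot be carried out, because the theorem is false as stated. Take $m=2$, $n=2$,
\[
M=\begin{pmatrix} I & 0 \\ I & I \end{pmatrix}\in R^{4\times 4},\qquad q=\begin{pmatrix}0\\0\\-1\\-1\end{pmatrix},
\]
with $I$ the $2\times 2$ identity matrix; every nonzero block is a $K$-matrix, so $M$ is a block lower triangular $K$-matrix. Then $x=(1,1,0,0)^{T}$ and $y=(0,0,1,1)^{T}$ are feasible, since $q+Mx=(1,1,0,0)^{T}\geq 0$ and $q+My=0$, but $z=\min(x,y)=0$ has $q+Mz=q\not\geq 0$. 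So $\text{FEA}(M,q)$ is not a meet semi-sublattice. (The same example also kills the paper's subsequent least-element theorem: a least element would have to be $\leq\min(x,y)=0$, hence equal to $0$, which is infeasible.) The paper's own proof breaks at precisely the point you flagged: it asserts that $M_{ij}\in K$ implies $z_2=\min(x_2,y_2)\in\text{FEA}(M_{22},M_{21}z_1+q_2)$, but $x_2$ and $y_2$ are feasible for LCPs with two \emph{different} constant vectors, $M_{21}x_1+q_2$ and $M_{21}y_1+q_2$, and the classical $Z$-matrix meet argument requires every off-diagonal entry of the whole matrix to be nonpositive --- which fails here because the diagonal entries of the $K$-blocks $M_{kl}$, $k\neq l$, are positive off-diagonal entries of $M$.

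One detail of your diagnosis should be corrected, since it locates where the failure actually lives. Because each $M_{kl}$ is itself a $Z$-matrix, the cross-term cannot drop below both comparisons: componentwise $M_{kl}z^{(l)}\geq\min\bigl(M_{kl}x^{(l)},M_{kl}y^{(l)}\bigr)$ (in coordinate $r$, compare $z^{(l)}$ against whichever of $x^{(l)},y^{(l)}$ attains $z_r^{(l)}$, exactly as in your diagonal-block argument). The true obstruction is mixing: in a fixed row of block $k$, the minimum of the cross-term may be attained by $x$ while the minimum of the diagonal term is attained by $y$, so the row sum for $z$ can fall strictly below both the row sum for $x$ and the row sum for $y$ --- this is exactly what happens in the counterexample above, where the cross-term help for $x$ and the diagonal-term help for $y$ are both lost at the meet. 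Your closing suggestion that an extra hypothesis is needed is the right conclusion: for instance, if the off-diagonal blocks are required to be nonpositive matrices instead of $K$-matrices, then $M$ is a genuine $Z$-matrix and the classical Cottle--Veinott argument gives the result.
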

\begin{proof}
	\begin{spacing}{1.5}
	Let $F=$FEA$(M,q).$ Let $x=\left[\begin{array}{rrrrr}
	x_1\\
	x_2\\
	x_3\\
	\vdots\\
	x_n\\
	\end{array}\right],y=\left[\begin{array}{rrrrr}
	y_1\\
	y_2\\
	y_3\\
	\vdots\\
	y_n\\
	\end{array}\right] \in F$ \end{spacing} \NI are two feasible vectors. So $x \geq 0,y\geq 0,Mx+q\geq 0,My+q \geq 0.$ \\ 
	\vsp \NI
	\begin{spacing}{1.5}
\NI	Let $z=\left[\begin{array}{rrrrr}
	z_1\\
	z_2\\
	z_3\\
	\vdots\\
	z_n\\
	\end{array}\right]=\text{min}(x,y).$ Then \\
	\vsp \NI
	$ Mx+q=\left[\begin{array}{rrrrr}
	M_{11}x_1+q_1\\
	M_{21}x_1+M_{22}x_2+q_2\\
	M_{31}x_1+M_{32}x_2+M_{33}x_3+q_3\\
	\vdots\\
	M_{n1}x_1+M_{n2}x_2+M_{n3}x_3+ \cdots +M_{nn}x_n+q_n\\
	\end{array}\right]\geq 0.\\ $ \end{spacing} $\implies x_1\in \text{FEA}(M_{11},q_1), x_2 \in \text{FEA}(M_{22},M_{21}x_1+q_1), \cdots, x_n \in \text{FEA}(M_{nn},M_{n1}x_1+M_{n2}x_2+\cdots+M_{n(n-1)}x_{n-1}+q_n).$  In similar way $My+q \geq 0 \implies  y_1\in \text{FEA}(M_{11},q_1), \\ y_2 \in \text{FEA}(M_{22},M_{21}x_1+q_1),\cdots, y_n \in \text{FEA}(M_{nn},M_{n1}x_1+M_{n2}x_2+\cdots+M_{n(n-1)}x_{n-1}+q_n).$ Suppose $z=\text{min}(x,y) \implies z_1=\text{min}(x_1,y_1), z_2=\text{min}(x_2,y_2),\cdots, z_n=\text{min}(x_n,y_n).$ $M_{ij} \in K \implies z_1 \in \text{FEA}(M_{11},q_1) \implies M_{11}z_1+q_1 \geq 0, z_2 \in \text{FEA}(M_{22},M_{21}z_1+q_2) \implies M_{22}z_2+M_{21}z_1+q_2 \geq 0,\cdots,  z_n \in \text{FEA}(M_{nn},M_{n1}z_1+M_{n2}z_2+\cdots+M_{n(n-1)}z_{n-1}+q_n) \implies M_{n1}z_1+M_{n2}z_2+\cdots+M_{n(n-1)}z_{n-1}+M_{nn}z_n+q_n\geq 0.$ So $z=\text{min}(x,y)\in \text{FEA}(M,q).$ Hence the feasible region is a meet semi-sublattice.
	\end{proof}  
	Cottle et al.\cite{cps} showed that if $F$ is a nonempty meet semi-sublattice, that is closed and bounded below, then $F$ has a least element by lemma \ref{sub}. Now we show that if the LCP$(M,q)$ is feasible, where $M$ is a block triangular $K$-matrix, then FEA$(M,q)$ contains a least element $u.$

	\begin{theorem}
		Let $M$ be a block triangular $K$-matrix and $q$ be an arbitrary vector. If the LCP$(M,q)$ is feasible, then FEA$(M,q)$ contains a least element $u,$ where $u$ solves the LCP$(M,q).$
	\end{theorem}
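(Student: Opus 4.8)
The plan is to prove the statement in two stages: first to produce the least element of FEA$(M,q)$, and then to show that this least element is automatically a solution of the complementarity problem. For the existence stage I would simply invoke the structure already in hand. By Theorem \ref{lattice} the feasible set $F = $ FEA$(M,q)$ is a meet semi-sublattice, and by hypothesis it is nonempty. Since $F = \{z : z \geq 0,\ Mz+q \geq 0\}$ is an intersection of finitely many closed half-spaces it is closed, and the constraint $z \geq 0$ bounds it below by the origin. Lemma \ref{sub} then applies directly and yields a least element $u$ of $F$, characterized by $u \leq z$ for all $z \in F$. This settles existence; the real work is in the second stage.

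For the second stage I would argue block by block, treating (without loss of generality) the block lower triangular case. Writing $b_k = \sum_{j<k} M_{kj}u_j + q_k$, feasibility of $u$ reads $(Mu+q)_k = M_{kk}u_k + b_k \geq 0$, so $u_k \in$ FEA$(M_{kk}, b_k)$. The crux is the claim that each $u_k$ is in fact the \emph{least} element of FEA$(M_{kk}, b_k)$. To see this, note that $M_{kk}$ is a $K$-matrix, hence a $Z$-matrix, so FEA$(M_{kk}, b_k)$ is a meet semi-sublattice (the single-block, $n=1$, instance of Theorem \ref{lattice}); being nonempty, closed and bounded below it has, by Lemma \ref{sub}, a least element $\hat u_k \leq u_k$. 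To promote this to $\hat u_k = u_k$ I would manufacture a competitor $\tilde u \in F$ that agrees with $u$ in blocks $1, \ldots, k-1$, equals $\hat u_k$ in block $k$, and is completed feasibly in the blocks above $k$. The completion is always possible because each diagonal block $M_{ll}$, being a $K$-matrix, is a $P$-matrix and hence a $Q$-matrix, so FEA$(M_{ll}, \cdot)$ is nonempty for every right-hand side; one solves for $\tilde u_{k+1}, \tilde u_{k+2}, \ldots$ in turn. A direct check shows $\tilde u \in F$: the blocks below $k$ reproduce the corresponding blocks of $Mu+q$, block $k$ is nonnegative because $\hat u_k \in$ FEA$(M_{kk}, b_k)$, and the blocks above $k$ are feasible by construction. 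Since $u$ is least, $u_k \leq \tilde u_k = \hat u_k$, which with $\hat u_k \leq u_k$ forces equality.

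Once each $u_k$ is identified as the least element of the $Z$-matrix feasible set FEA$(M_{kk}, b_k)$, the classical least-element theory for $Z$-matrices \cite{cps} gives $u_k \in$ SOL$(M_{kk}, b_k)$; equivalently, this is a one-coordinate perturbation argument (if $(u_k)_i > 0$ and $(M_{kk}u_k + b_k)_i > 0$, decreasing the $i$-th coordinate slightly stays feasible, the off-diagonal entries of $M_{kk}$ being nonpositive, contradicting minimality). Hence $u_k^T(Mu+q)_k = u_k^T(M_{kk}u_k + b_k) = 0$ for every $k$, and summing over $k = 1, \ldots, n$ gives $u^T(Mu+q) = 0$; together with $u \geq 0$ and $Mu+q \geq 0$ this shows $u \in$ SOL$(M,q)$, so the least element $u$ solves LCP$(M,q)$.

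The step I expect to be the main obstacle is precisely the block claim, and the reason is that $M$ is \emph{not} globally a $Z$-matrix: its off-diagonal blocks are $K$-matrices and therefore carry positive diagonal entries, so lowering a single coordinate of $u$ can destroy feasibility in a later block and the naive global perturbation fails. The completion device is exactly what circumvents this, since it exploits the $Q$-matrix property of the diagonal blocks to re-solve the lower blocks after perturbing an upper one, thereby localizing the perturbation argument to a single diagonal block where the genuine $Z$-structure is available. This localization is the key idea on which the whole proof turns.
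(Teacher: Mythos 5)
Your proof is correct, and its skeleton is the same as the paper's: existence of the least element $u$ via Theorem \ref{lattice} and Lemma \ref{sub}, followed by a blockwise reduction to the classical least-element theory for $Z$-matrices. The genuine difference is at the crux, where your version supplies a justification the paper omits. The paper's proof simply asserts that each block $l_i$ of the least element is the least element of the block feasible set $F_i = \mathrm{FEA}(M_{ii},\, \sum_{j<i} M_{ij} l_j + q_i)$, supported by the claim that $y_i \in F_i$ for every feasible $y$; that claim is not literally true, because the off-diagonal blocks $M_{ij}$ are $K$-matrices, not nonnegative or nonpositive matrices, so $l_j \leq y_j$ gives no order relation between $M_{ij} l_j$ and $M_{ij} y_j$, and hence membership of $y_i$ in $F_i$ does not follow from feasibility of $y$. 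Your completion device closes exactly this gap: any competitor $\hat u_k \in \mathrm{FEA}(M_{kk}, b_k)$ is extended to a globally feasible vector by re-solving the blocks below it, which is possible precisely because the diagonal blocks are $K$-matrices, hence $P$- and $Q$-matrices, and then leastness of $u$ forces $u_k = \hat u_k$. Your diagnosis of why the naive one-coordinate perturbation of $u$ fails is also accurate --- lowering a coordinate in block $k$ can destroy feasibility in row $j$ of a later block, since the off-diagonal $K$-blocks have positive diagonal entries --- and it is telling that the stray draft fragment left in the paper's source after the conclusion attempts exactly that perturbation and only checks indices inside the same block. In short: same route as the paper, but your argument is complete where the paper's is not.
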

\begin{proof}
	Let $F=$FEA$(M,q).$ By theorem \ \ref{lattice}, $F$ is a meet semi-sublattice. Let  LCP$(M,q)$ be feasible. Then the set $F$ is obviously nonempty and bounded below by zero. Then the existence of the least element $l=\left[\begin{array}{rrrrr}
	l_1\\
	l_2\\
	l_3\\
	\vdots\\
	l_n\\
	\end{array}\right]$ follows from lemma \ref{sub}. That is $l =\left[\begin{array}{rrrrr}
	l_1\\
	l_2\\
	l_3\\
	\vdots\\
	l_n\\
	\end{array}\right] \leq \left[\begin{array}{rrrrr}
	x_1\\
	x_2\\
	x_3\\
	\vdots\\
	x_n\\
	\end{array}\right] =x \ \forall \ x \in F$ and $l \in F.$\\ 
	\vsp
	 \NI Let $F_i = \ $FEA$(M_{ii},M_{i(i-1)}z_{i-1}+M_{i(i-2)}z_{i-2}+\cdots+M_{i2}z_2+M_{i1}z_1+q_i).$ Now it is clear that $y_1 \in F_1, y_2 \in F_2, \cdots, y_n \in F_n,$ where $y=\left[\begin{array}{rrrrr}
	y_1\\
	y_2\\
	y_3\\
	\vdots\\
	y_n\\
	\end{array}\right] \in F.$ As $M_{ii} $ are $Z$- matrices, $l_i$ is the least element of $F_i \ \forall \ i \in \{1,2,\cdots,n\}$ and $l_i$ solves LCP$(M_{ii},M_{i(i-1)}z_{i-1}+M_{i(i-2)}z_{i-2}+\cdots+M_{i2}z_2+M_{i1}z_1+q_i).$ So $l=\left[\begin{array}{rrrrr}
	l_1\\
	l_2\\
	l_3\\
	\vdots\\
	l_n\\
	\end{array}\right]$ solves LCP$(M,q).$
\end{proof}

\vsp
	Mangasarian \cite{mangasarianlinear} showed that if $z$ solves the linear program, \  $\min p^Tz $ subject to $Mz+q \geq 0, z \geq 0$ and if the corresponding optimal dual variable $y$ satisfies $(I-M^T)y+p>0,$ then $z$ solves the linear complementarity problem LCP$(M,q)$ by lemma \ref{lemm}. Here we show that if LCP$(M,q)$ with $M,$ a block triangular $K$-matrix, has a solution which can be obtained by solving the linear program $\min $ \ $p^Tx $ subject to $Mx+q \geq 0,$ $x \geq 0.$

\begin{theorem}
	The linear complementarity problem LCP$(M,q),$ where $M$ is a block triangular $K$-matrix, has a solution which can be obtained by solving the linear program $\min $ \ $p^Tx $ subject to $Mx+q \geq 0,$ $x \geq 0,$ where $p=r \geq 0$ and $Z_1$ is a block triangular $K$-matrix with $r^TZ_1>0.$
\end{theorem}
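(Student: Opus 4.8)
The plan is to derive the statement from Mangasarian's criterion recorded in Lemma \ref{lemm}. According to that lemma it suffices to exhibit a solution $x$ of the linear program $\min\, p^Tx$ subject to $Mx+q\ge 0,\ x\ge 0$ together with a corresponding optimal dual vector $y\ge 0$ for which $(I-M^T)y+p>0$; then $x$ automatically solves LCP$(M,q)$. Accordingly I would take $p=r\ge 0$, with $r$ and the block triangular $K$-matrix $Z_1$ satisfying $r^TZ_1>0$ as in the hypothesis, and organize the argument in three steps.

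First I would dispose of solvability of the linear program. By Theorem \ref{p}, $M$ is a $P$-matrix, so LCP$(M,q)$ is solvable and hence the polyhedron $\{x:Mx+q\ge 0,\ x\ge 0\}$ is nonempty; since $p=r\ge 0$ and $x\ge 0$ throughout this polyhedron, the objective $p^Tx$ is bounded below by $0$. Nonemptiness of the feasible set together with boundedness of the objective guarantees an optimal primal solution $x$, and linear programming duality then supplies an optimal dual solution $y\ge 0$ with $M^Ty\le p$, together with the complementary slackness relations $y^T(Mx+q)=0$ and $x^T(p-M^Ty)=0$.

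The heart of the proof is the verification of the strict inequality $(I-M^T)y+p>0$. Rewriting the left-hand side as $y+(p-M^Ty)$, dual feasibility already yields $y\ge 0$ and $p-M^Ty\ge 0$, so the vector is nonnegative; what must be ruled out are the coordinates $i$ in which simultaneously $y_i=0$ and $(p-M^Ty)_i=0$. Here I would use the fact that $M^T$ is itself a block (upper) triangular $K$-matrix, since transposition preserves the $Z$ property and, by Lemma \ref{pptt}, the $P$ property, hence the $K$ property of every block. I would then exploit this triangular block structure together with the relation $r^TZ_1>0$: arguing block by block, in the recursive spirit of Theorem \ref{lattice} and the preceding least-element theorem, the nonnegativity of $r$ and the strict positivity $r^TZ_1>0$ should force $(p-M^Ty)_i>0$ on exactly those coordinates where $y_i$ vanishes, giving the required strict inequality.

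Once $(I-M^T)y+p>0$ is established, Lemma \ref{lemm} immediately shows that the linear-programming optimizer $x$ is a solution of LCP$(M,q)$, which is the assertion. I expect the genuine difficulty to lie entirely in the middle step: the duality bookkeeping is routine, but eliminating the degenerate coordinates where both $y_i=0$ and $(p-M^Ty)_i=0$ is delicate, and it is precisely the device $p=r$ with $r^TZ_1>0$ for a block triangular $K$-matrix $Z_1$ that is intended to remove this degeneracy. Turning that intention into a rigorous coordinatewise argument, propagated through the triangular blocks, is where the real work will be.
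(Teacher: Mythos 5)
Your overall skeleton does match the paper's: both set up the primal linear program and its dual with optimal solutions $x$ and $y$, and both reduce everything to verifying the strict inequality $(I-M^T)y+p>0$ so that Lemma \ref{lemm} applies. Your first and third steps are sound (you in fact justify solvability of the primal--dual pair more carefully than the paper, which simply asserts it). But the middle step --- the only nontrivial content of the theorem beyond Mangasarian's lemma --- is not proved in your proposal: you assert that a block-by-block recursion ``should force'' $(p-M^Ty)_i>0$ at the coordinates where $y_i=0$, and you concede yourself that turning this into a rigorous argument ``is where the real work will be.'' That is a genuine gap, not a proof with routine details omitted.

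Moreover, the recursion you sketch is unlikely to close the gap in that form, because neither dual feasibility nor complementary slackness ties the dual vector $y$ to the matrix $Z_1$ in any blockwise way; the hypothesis $r^TZ_1>0$ is a single row-vector inequality, and one needs a mechanism that converts it into componentwise positivity of $(I-M^T)y+p$. The paper's mechanism is algebraic rather than recursive: write $M=D-U$, where $D$ is the block triangular matrix whose blocks $D_{ij}$ are positive diagonal matrices and $U\geq 0$; take $Z_1=D-V$ with the \emph{same} $D$ and $V\geq 0$, and use that the product satisfies $MZ_1=D-W$ with $W\geq 0$; choose $r\geq 0$ to be an optimal strategy of the game with payoff matrix $Z_1^T$, whose value is positive because $Z_1$ is a $P$-matrix (Lemma \ref{pptt}), so that $p^TZ_1=r^TZ_1>0$. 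Then the chain $0<p^TZ_1=(p^T-y^TM)Z_1+y^T(MZ_1)=(p^T-y^TM)(D-V)+y^T(D-W)\leq\bigl(p^T-y^TM+y^T\bigr)D$ holds componentwise, using only dual feasibility $p-M^Ty\geq 0$, $y\geq 0$, and $V,W\geq 0$; positivity of the diagonal entries of $D$ then yields $(I-M^T)y+p>0$, and Lemma \ref{lemm} finishes the proof. Two features of this argument are exactly what your sketch is missing: the hypothesis on $Z_1$ enters only through multiplying $p^T$ on the right by $Z_1$ (no coordinatewise analysis of $y$ is ever attempted), and the proof quietly exploits extra structure of $Z_1$ --- that it shares the block-diagonal part $D$ of $M$ and that $MZ_1$ is again $D$ minus a nonnegative matrix --- so $Z_1$ is not an arbitrary block triangular $K$-matrix with $r^TZ_1>0$. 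If you replace your recursive step by this computation, your outline becomes the paper's proof.
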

\begin{proof}
	Let $M$ be a block triangular $K$-matrix. The linear program,  $\min $ \ $p^Tx $ subject to $Mx+q \geq 0,$ $x \geq 0$ and the dual linear program, $\max -q^Ty$ subject to $-M^Ty+p \geq 0,\  y \geq 0$ have solutions $x$ and $y$ respectively. $M$ can be written as $D-U,$ where 	\begin{spacing}{1.5} \NI $D=\left[\begin{array}{rrrrr} 
	D_{11} & 0 & 0 & \cdots & 0\\
	D_{21} & D_{22} & 0  & \cdots & 0\\	
	D_{31} & D_{32} & D_{33} & \cdots & 0\\
	\vdots & \vdots & \vdots & \vdots & \vdots\\
	D_{n1} & D_{n2} & D_{n3} & \cdots & D_{nn}\\
	\end{array}\right],$\end{spacing} \NI $ D_{ij}$'s are diagonal matrices with positive entries and  
		\begin{spacing}{1.5} \NI
	$U=\left[\begin{array}{rrrrr} 
	U_{11} & 0 & 0 & \cdots & 0\\
	U_{21} & U_{22} & 0  & \cdots & 0\\	
	U_{31} & U_{32} & U_{33} & \cdots & 0\\
	\vdots & \vdots & \vdots & \vdots & \vdots\\
	U_{n1} & U_{n2} & U_{n3} & \cdots & U_{nn}\\
	\end{array}\right],$ \end{spacing} \NI $ U_{ij}$'s are matrices with nonnegative entries.
Consider $Z_1=D-V,$ a block triangular $K$-matrix and the matrix product $MZ_1=D-W$, where 
	\begin{spacing}{1.5} \NI
 $V=\left[\begin{array}{rrrrr} 
V_{11} & 0 & 0 & \cdots & 0\\
V_{21} & V_{22} & 0  & \cdots & 0\\	
V_{31} & V_{32} & V_{33} & \cdots & 0\\
\vdots & \vdots & \vdots & \vdots & \vdots\\
V_{n1} & V_{n2} & V_{n3} & \cdots & V_{nn}\\
\end{array}\right],$ \end{spacing} \NI $ V_{ij}$'s are matrices with nonnegative entries and 	\begin{spacing}{1.5} \NI $W=\left[\begin{array}{rrrrr} 
W_{11} & 0 & 0 & \cdots & 0\\
W_{21} & W_{22} & 0  & \cdots & 0\\	
W_{31} & W_{32} & W_{33} & \cdots & 0\\
\vdots & \vdots & \vdots & \vdots & \vdots\\
W_{n1} & W_{n2} & W_{n3} & \cdots & W_{nn}\\
\end{array}\right],$ \end{spacing} \NI $ W_{ij}$'s are matrices with nonnegtive entries. Since $Z_1$ is a block triangular  $K$-matrix, it is a $P$-matrix. Hence $v(Z_1)>0$ and by lemma \ref{pptt} $v({Z_1}^T)>0$. Let $r \geq 0$ be the value of $Z_1^T,$ then $r^TZ_1>0.$ Now $0<r^TZ_1=p^TZ_1= p^TZ_1+y^T(-MZ_1+D-W)=(p^T-y^TM)Z_1+y^T(D-W)=(p^T-y^TM)(D-V)+y^T(D-W)\leq (p^T-y^TM+y^T)D$ as $p^T-y^TM \geq 0, y \geq 0, U\geq 0,V \geq 0.$ This implies $(I-M^T)y+p>0,$ since $D_{ij}$'s are positive diagonal matrices. So by lemma \ref{lemm} $x$ solves LCP$(M,q),$ which is a solution of $\min $ \ $p^tx $ subject to $Mx+q \geq 0,$ $x \geq 0.$ 
\end{proof}
\begin{corol}
	The solution of linear complementarity problem LCP$(M,q),$ with $M \in$ block triangular $K$-matrix can be obtained by solving the linear program $\min $ \ $e^Tx $ subject to $Mx+q \geq 0,$ $x \geq 0.$ 
\end{corol}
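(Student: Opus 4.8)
The plan is to obtain this corollary as an immediate specialization of the preceding theorem, by producing an explicit admissible pair $(p,Z_1)$ with $p=e$. That theorem guarantees the solution of LCP$(M,q)$ is recovered from the linear program $\min p^Tx$ subject to $Mx+q\geq 0,\ x\geq 0$ whenever one can choose $p=r\geq 0$ together with a block triangular $K$-matrix $Z_1$ satisfying $r^TZ_1>0$. It therefore suffices to exhibit such data with $r=e$.

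First I would take $Z_1=I$, the identity matrix in $R^{mn\times mn}$, and verify that it is a block triangular $K$-matrix. Its $m\times m$ diagonal blocks are identity matrices, which are $Z$-matrices (all off-diagonal entries vanish and are hence nonpositive, while the diagonal entries are positive) and $P$-matrices (every principal minor equals $1>0$), so each diagonal block is a $K$-matrix; all off-diagonal blocks are zero. Hence $I$ is simultaneously of block upper and block lower triangular form with $K$-matrix blocks, so it qualifies as a block triangular $K$-matrix. Checking the hypothesis $r^TZ_1>0$ for $r=e\geq 0$ is then trivial: $e^TI=e^T$, every entry of which equals $1>0$, so $e^TZ_1>0$. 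Consequently the pair $(p,Z_1)=(e,I)$ meets every requirement of the theorem.

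I would note that the choice $Z_1=I$ is forced rather than cosmetic: taking $Z_1=M$ need not work, since the column sums of a block triangular $K$-matrix can be negative, so $e^TM>0$ can fail; the identity is the natural universal witness for which the column sums are automatically positive. Applying the theorem with $(p,Z_1)=(e,I)$ then yields that the solution of LCP$(M,q)$ is obtained by solving $\min e^Tx$ subject to $Mx+q\geq 0,\ x\geq 0$, which is exactly the claim.

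The only point requiring care is to confirm that the theorem's internal splitting specializes consistently to $Z_1=I$. Writing $M=D-U$ with $D$ the (positive) block-diagonal-of-diagonals part, which has positive diagonal since each diagonal block $M_{ii}$ is a $K$-matrix, and $U\geq 0$, the choice $Z_1=I$ gives $V=0$ and $MZ_1=M=D-U$, so $W=U\geq 0$; the decisive inequality $(I-M^T)y+p>0$ then follows exactly as in the theorem because $D$ has positive diagonal, and Lemma \ref{lemm} applies. I do not expect this to be a genuine obstacle—it is the routine verification that the identity is an allowable witness—but it is the step at which one must make sure the chain of estimates from the theorem carries over unchanged.
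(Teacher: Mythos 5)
Your proposal is correct and takes exactly the same route as the paper's own proof: choose $Z_1=I$, observe that the identity is a block triangular $K$-matrix, and note $e^TI=e^T>0$, so the preceding theorem applies with $p=e$. Your additional verifications (that the diagonal blocks of $I$ are $K$-matrices, and that the theorem's decomposition specializes consistently with $V=0$, $W=U$) are sound but simply elaborate what the paper states in two sentences.
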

\begin{proof}
	The identity matrix $I$  itself is a block triangular $K$-matrix. Then $e^TI>0.$
\end{proof}
\begin{theorem}
	Let $M$ be a block triangular $K$-matrix. Then $M^{-1}$ exists and $M^{-1}\geq 0.$
\end{theorem}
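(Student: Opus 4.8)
The plan is to separate the two assertions. For invertibility, I would note that by Theorem \ref{p} a block triangular $K$-matrix $M$ is a $P$-matrix; since every principal minor of a $P$-matrix is positive, in particular $\det M > 0$ and $M^{-1}$ exists. Equivalently, one may argue directly from the block structure: a block lower (or upper) triangular matrix has determinant equal to the product of the determinants of its diagonal blocks $M_{ii}$, each of which is a $K$-matrix and hence a $P$-matrix with positive determinant, so $\det M = \prod_{i} \det M_{ii} \neq 0$.

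For the nonnegativity of $M^{-1}$, I would first record that $M^{-1}$ is again block triangular of the same type, with diagonal blocks $(M^{-1})_{ii} = M_{ii}^{-1}$; this follows from block forward substitution. The problem thus reduces to showing that every block of $M^{-1}$ is nonnegative. The diagonal blocks are handled by the classical fact that a $K$-matrix has a nonnegative inverse, which I would prove with the spectral tools already available: writing the $Z$-matrix $M_{ii}$ as $M_{ii} = \alpha I - B$ with $B \geq 0$, the $P$-matrix property forces every real proper value of $M_{ii}$ to be positive (Theorem \ref{pvp}(v)), so by the Perron--Frobenius lemma $\alpha > \sigma(B) = p(B)$; hence $\sigma(\alpha^{-1}B) < 1$ and the Neumann series $M_{ii}^{-1} = \alpha^{-1}\sum_{k \geq 0}(\alpha^{-1}B)^{k}$ converges to a nonnegative matrix.

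It remains to control the strictly off-diagonal blocks of $M^{-1}$. Writing $N = M^{-1}$, block forward substitution gives the recursion $N_{ij} = -M_{ii}^{-1}\big(\sum_{k=j}^{i-1} M_{ik}N_{kj}\big)$ for $i > j$ (in the lower triangular case), and I would try to prove $N_{ij} \geq 0$ by induction on $i - j$, using $M_{ii}^{-1} \geq 0$ together with the inductive nonnegativity of the $N_{kj}$ for $k < i$. I expect this last step to be the main obstacle: $M_{ii}^{-1} \geq 0$ alone does not force $N_{ij} \geq 0$, because the off-diagonal blocks $M_{ik}$ are themselves $K$-matrices with positive diagonal entries rather than nonpositive matrices, so the products $M_{ii}^{-1}M_{ik}N_{kj}$ need not be nonpositive. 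Making the induction go through therefore requires extracting extra sign information from the interaction between $M_{ii}^{-1}$ and the off-diagonal blocks (for instance, establishing $M_{ii}^{-1}M_{ik} \leq 0$, or otherwise arguing that the whole array behaves like a single nonsingular $M$-matrix so that the classical equivalence between the $P$-plus-$Z$ property and inverse-nonnegativity applies globally). Pinning down exactly which structural hypothesis secures this sign control is where I would concentrate the effort.
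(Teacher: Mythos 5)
Your invertibility argument and your reduction to the blocks of $M^{-1}$ are both correct, but the obstacle you isolated in the last step is not a lemma you failed to find --- it is a genuine counterexample, and the theorem as stated is false. Take $n=2$ and every nonzero block equal to the $K$-matrix $I_m$ (or, with $m=1$, the scalar $1$): then
\[
M=\left[\begin{array}{cc} I & 0\\ I & I\end{array}\right],
\qquad
M^{-1}=\left[\begin{array}{cc} I & 0\\ -I & I\end{array}\right]\not\geq 0 .
\]
Exactly as your recursion $N_{ij}=-M_{ii}^{-1}\bigl(\sum_{k} M_{ik}N_{kj}\bigr)$ predicts, the culprit is that the off-diagonal blocks $M_{ik}$ are themselves $K$-matrices, hence have strictly positive diagonal entries; consequently $M$ as a whole is never a $Z$-matrix once a nonzero off-diagonal block is present, and no sign control of the form $M_{ii}^{-1}M_{ik}\leq 0$ is available. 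Even the paper's own $6\times 6$ example violates the claim: writing its blocks as $A=M_{11}$, $B=M_{21}$, $C=M_{22}$, one computes $(M^{-1})_{21}=-C^{-1}BA^{-1}=-\left[\begin{array}{cc} 68 & 40\\ 59 & 36\end{array}\right]$, which is entrywise negative.

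You should also know that the paper's proof does not close this gap either: it opens by assuming $Q=I-tM\geq 0$ for some $t>0$ and then runs the classical Neumann-series argument ($0<p(Q)<1$ via the $P$-property, so $(I-Q)^{-1}=I+Q+Q^2+\cdots\geq 0$ and $(tM)^{-1}\geq 0$). But the existence of $t>0$ with $I-tM\geq 0$ is precisely the statement that $M$ is a $Z$-matrix, which, as you observed, fails for every block triangular $K$-matrix with a nonzero off-diagonal block. So the paper's argument is valid only in the degenerate block diagonal (or $n=1$) case and silently assumes away the very difficulty you pinpointed. The correct statement that survives is exactly your partial result: $M^{-1}$ exists, is block triangular of the same type, and its diagonal blocks $(M^{-1})_{ii}=M_{ii}^{-1}$ are nonnegative.
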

\begin{proof}
Assume that $Q=I-tM \geq 0, t>0.$ Let $p(Q)$ be the Perron-root of $Q$. Then we have $\det [(1-p(Q))I - tM]=\det [Q-p(Q)I]=0.$ By theorem \ref{pvp}, $0<p(Q)<1.$ Thus the series $I+Q+Q^2+\cdots$ converges to the matrix $(I-Q)^{-1}=(tM)^{-1}\geq0,$ since $Q^k \geq 0$ for $k=1,2,\cdots.$ Therefore $M^{-1}$ exists and $M^{-1}\geq0.$
\end{proof}
\begin{theorem}
	Let $N$ be a block triangular $K$-matrix and $M$ ba a $Z$-matrix such that $M\leq N.$ Then both $M^{-1} $ and $N^{-1} $ exist and $M^{-1}\geq N^{-1} \geq 0.$\\

\end{theorem}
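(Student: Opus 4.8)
The plan is to reduce the assertion to the standard comparison of two nonsingular $M$-matrices, using the Perron-root device already employed in the preceding theorem. First, since $N$ is a block triangular $K$-matrix, the previous theorem gives at once that $N^{-1}$ exists and $N^{-1}\geq 0$, which disposes of the right-hand inequality $N^{-1}\geq 0$. It therefore remains to show that $M^{-1}$ exists with $M^{-1}\geq 0$, and then to establish $M^{-1}\geq N^{-1}$.

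For the comparison I would use the algebraic identity
\[
M^{-1}-N^{-1}=M^{-1}(N-M)N^{-1},
\]
valid as soon as both inverses exist. Because $M\leq N$ gives $N-M\geq 0$, once $M^{-1}\geq 0$ and $N^{-1}\geq 0$ are known the right-hand side is a product of three entrywise nonnegative matrices and hence nonnegative, so $M^{-1}\geq N^{-1}$ follows immediately. Thus the whole weight of the proof falls on showing that the $Z$-matrix $M$ is in fact a nonsingular $M$-matrix, i.e. that $M^{-1}$ exists and $M^{-1}\geq 0$.

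To treat that step I would mimic the previous theorem and put both matrices on a common footing: choosing $t>0$ small enough that $P:=I-tM\geq 0$ and $Q:=I-tN\geq 0$ simultaneously, we have $M=\tfrac1t(I-P)$, $N=\tfrac1t(I-Q)$, and the hypothesis $M\leq N$ becomes $P\geq Q\geq 0$. For $N$, property (v) of Theorem \ref{pvp} forces every real proper value of the $P$-matrix $N$ to be positive, whence $0\leq p(Q)<1$ and $(I-Q)^{-1}=\sum_{k\geq 0}Q^{k}=(tN)^{-1}\geq 0$. The analogous statement for $M$ requires $p(P)<1$, so that $(I-P)^{-1}=\sum_{k\geq 0}P^{k}=(tM)^{-1}\geq 0$, which gives $M^{-1}\geq 0$.

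I expect securing $p(P)<1$ to be the main obstacle. The difficulty is one of direction: monotonicity of the Perron root applied to $Q\leq P$ yields only $p(P)\geq p(Q)$, which runs the wrong way, so the bound for $N$ cannot simply be transferred to $M$. Closing this gap is exactly the point at which the domination $M\leq N$ by the block triangular $K$-matrix $N$ must be exploited to place $M$ itself into the nonsingular $M$-matrix class; this is the one genuinely delicate step, and everything else — the convergence of the Neumann series, the nonnegativity of the inverses, and the final inequality $M^{-1}\geq N^{-1}\geq 0$ — then follows mechanically from the identity and the termwise estimate $Q^{k}\leq P^{k}$.
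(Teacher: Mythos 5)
Your closing paragraph is the heart of the matter, and the gap you flag there is not merely delicate — it is unfillable, because the theorem as stated is false. Take $N=I$ (a one-block block triangular $K$-matrix) and $M=0$, or $M=-I$: both choices of $M$ are $Z$-matrices with $M\leq N$, yet $M^{-1}$ fails to exist in the first case and equals $-I\not\geq N^{-1}$ in the second. As you observe, Perron-root monotonicity runs the wrong way: $Q\leq P$ gives only $p(P)\geq p(Q)$, and nothing in the hypotheses caps $p(P)$ from above. The classical comparison theorem for $M$-matrices requires the domination in the opposite direction: if a $Z$-matrix $M$ satisfies $M\geq N$ with $N$ a nonsingular $M$-matrix, then $M$ is a nonsingular $M$-matrix and $0\leq M^{-1}\leq N^{-1}$; the smaller matrix does not inherit invertibility from the larger one.

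For comparison, the paper's own proof "closes" your gap with a step that is simply wrong: after setting $R=I-\alpha N$ and $S=I-\alpha M$, so that $S\geq R\geq 0$, it asserts that since $S^k\geq R^k\geq 0$ and $I+R+R^2+\cdots$ converges, the series $I+S+S^2+\cdots$ converges to $(\alpha M)^{-1}$. Domination works the other way: a nonnegative series whose terms dominate those of a convergent series need not converge (in the counterexample $N=I$, $M=0$, $\alpha=\frac{1}{2}$, one has $R=\frac{1}{2}I$, $S=I$, and $\sum_k S^k$ diverges). The parts of your proposal that are sound — the Neumann-series treatment of $N$ and the identity $M^{-1}-N^{-1}=M^{-1}(N-M)N^{-1}$ — do suffice to prove the corollary that follows this theorem in the paper, where both $M$ and $N$ are assumed to be block triangular $K$-matrices; indeed your identity yields the inverse comparison there more cleanly than the paper's termwise estimate $S^k\geq R^k$. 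But for the theorem as stated, there is no correct proof to compare yours against.
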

\begin{proof}
	Let $N$ be a block triangular $K$-matrix and $M$ ba a $Z$-matrix such that $M\leq N.$
Assume that $R=I-\alpha N \geq 0, \alpha>0.$ Then  $S=I-\alpha M \geq R \geq 0.$ Let $p(R)$ be a Perron root of $R$. Then we have $\det[(1-p(R))I - \alpha N]=\det [R-p(R)I]=0.$ By theorem \ref{pvp}, $0<p(R)<1.$ Thus the series $I+R+R^2+\cdots$ converges to the matrix $(I-R)^{-1}=(\alpha N)^{-1}.$ Since $S^k \geq R^k \geq 0,$ for $k=1,2,\cdots,$   the series $I+S+S^2+\cdots$ converges to the matrix $(I-S)^{-1}=(\alpha M)^{-1}.$ Therefore $M^{-1}$ and $N^{-1}$ exist and $M^{-1}\geq N^{-1} \geq 0.$\\
\end{proof}
\begin{corol}
	Assume that $M,N$ are block triangular $K$-matrices such that $M\leq N.$ Then both $M^{-1} $ and $N^{-1} $ exist and $M^{-1}\geq N^{-1} \geq 0.$\\
\end{corol}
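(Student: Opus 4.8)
The plan is to obtain this corollary as an immediate specialization of the theorem stated just above it. That theorem already delivers the full conclusion $M^{-1}\geq N^{-1}\geq 0$ under the weaker hypotheses that $N$ is a block triangular $K$-matrix while $M$ is merely a $Z$-matrix with $M\leq N$. So the only thing I need to supply is the observation that the stronger hypothesis now imposed on $M$, namely that $M$ is itself a block triangular $K$-matrix, already forces $M$ to be a $Z$-matrix. Once that inclusion is recorded, the corollary requires no new estimate at all.

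Concretely, the key step I would carry out is to verify that a block triangular $K$-matrix is a $Z$-matrix in the sense used in the preceding proof. The diagonal blocks $M_{ii}$ are $K$-matrices, hence $Z$-matrices, so within each diagonal block the off-diagonal entries are non-positive; combined with the block-triangular sign pattern this is exactly the structure that makes $R=I-\al M\geq 0$ achievable for a suitable $\al>0$, which is the only feature of the smaller matrix that the theorem's Neumann-series argument actually consumes. Thus a block triangular $K$-matrix $M$ meets the $Z$-matrix hypothesis required there, and I would state this as the single non-trivial reduction.

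With $M$ recognized as a $Z$-matrix, $N$ a block triangular $K$-matrix, and $M\leq N$ given by assumption, the hypotheses of the preceding theorem are satisfied verbatim, and invoking it yields that both $M^{-1}$ and $N^{-1}$ exist with $M^{-1}\geq N^{-1}\geq 0$. No Perron-root bound $0<p(R)<1$ and no convergence of $I+R+R^2+\cdots$ needs to be redone, since all of that was already established in the theorem being cited. The main obstacle, and indeed the only place where care is genuinely needed, is the first step: pinning down precisely that the block triangular $K$ structure supplies the $Z$-matrix sign pattern that the cited theorem demands of its smaller matrix; after that the corollary is a one-line consequence.
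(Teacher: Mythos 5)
Your entire reduction rests on the claim that a block triangular $K$-matrix is a $Z$-matrix, so that the preceding theorem's hypotheses are ``satisfied verbatim,'' and that claim is false: this is the gap. By the paper's definition, \emph{every} nonzero block of a block triangular $K$-matrix is a $K$-matrix, including the off-diagonal blocks $M_{ij}$, $i\neq j$. A $K$-matrix is in particular a $P$-matrix, so its diagonal entries are strictly positive, and for $i\neq j$ those entries occupy \emph{off-diagonal} positions of the full matrix $M$. Hence any block triangular $K$-matrix with at least two block rows has strictly positive off-diagonal entries and is not a $Z$-matrix; equivalently, $I-\al M$ has strictly negative entries in those positions for every $\al>0$, so the sign condition you describe as ``achievable'' is never achievable. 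The paper's own illustrative matrix in Section 3 witnesses this: its $(3,1)$ entry equals $3>0$, coming from the diagonal of the $K$-block $M_{21}$. Your argument checks the diagonal blocks and the zero blocks but silently ignores the nonzero below-diagonal blocks, which are exactly where the $Z$-property fails.

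Moreover, the gap cannot be closed by a smarter reduction, because the failure propagates to the statement itself. The paper offers no proof of the corollary (implicitly it makes the same leap you do), and its proofs of the two preceding theorems begin by ``assuming'' $I-tM\geq 0$, respectively $I-\al N\geq 0$, which no genuine block triangular $K$-matrix with $n\geq 2$ can satisfy. In fact the conclusion is false: with $I$ the $m\times m$ identity (itself a $K$-matrix), take
\begin{equation*}
M=\left[\begin{array}{cc} I & 0\\ I & I\end{array}\right]\leq
N=\left[\begin{array}{cc} I & 0\\ 2I & I\end{array}\right],
\qquad\text{so that}\qquad
N^{-1}=\left[\begin{array}{cc} I & 0\\ -2I & I\end{array}\right].
\end{equation*}
Both $M$ and $N$ are block triangular $K$-matrices with $M\leq N$, yet $N^{-1}$ is not nonnegative, so $M^{-1}\geq N^{-1}\geq 0$ fails. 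The step you flagged as ``the only place where care is genuinely needed'' is thus not a detail to be filled in; it is precisely the point at which both your argument and the paper's claim break down.
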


\begin{theorem}
	Let $N$ be a hidden block triangular $K$- matrix. Then every diagonal block of $N$ is a hidden $Z$- matrix.
\end{theorem}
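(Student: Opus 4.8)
The plan is to exploit the shared block-triangular structure of $N$, $X$, $Y$ to decouple the relation $NX = Y$ into diagonal identities $N_{ii}X_{ii} = Y_{ii}$, and then to verify the two defining conditions of a hidden $Z$-matrix for each diagonal block $N_{ii}$, using $X_{ii}$ and $Y_{ii}$ as the two witnessing $Z$-matrices.

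First I would write $N$, $X$, $Y$ in the common triangular orientation guaranteed by the definition; say $N_{ij} = X_{ij} = Y_{ij} = 0$ whenever $i < j$. Forming the $(i,i)$ block of the product, $(NX)_{ii} = \sum_{k} N_{ik}X_{ki}$, the factor $N_{ik}$ vanishes unless $k \leq i$ while $X_{ki}$ vanishes unless $k \geq i$, so only the term $k = i$ survives. Hence $(NX)_{ii} = N_{ii}X_{ii}$, and comparing with $Y$ gives $N_{ii}X_{ii} = Y_{ii}$ for every $i$. Since $X$ and $Y$ are block triangular $K$-matrices, their diagonal blocks $X_{ii}$ and $Y_{ii}$ are $K$-matrices, hence in particular $Z$-matrices. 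This already supplies condition (i) in the definition of a hidden $Z$-matrix, with the two required $Z$-matrices taken to be $X_{ii}$ and $Y_{ii}$.

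It then remains to produce vectors $r, s \geq 0$ with $r^T X_{ii} + s^T Y_{ii} > 0$. Because $X_{ii}$ is a $K$-matrix it is a $P$-matrix, so $v(X_{ii}) > 0$; by Lemma \ref{pptt} the transpose $X_{ii}^T$ is again a $P$-matrix, whence $v(X_{ii}^T) > 0$. The game-value characterization stated in the preliminaries yields a vector $0 \neq r \geq 0$ with $X_{ii}^T r > 0$, that is $r^T X_{ii} > 0$. Choosing this $r$ together with $s = 0$ gives $r^T X_{ii} + s^T Y_{ii} = r^T X_{ii} > 0$, which is condition (ii). Therefore each diagonal block $N_{ii}$ is a hidden $Z$-matrix.

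The block bookkeeping is routine; the one point needing care is that the diagonal identity $N_{ii}X_{ii} = Y_{ii}$ genuinely decouples, which relies on $N$ and $X$ sharing the same triangular orientation so that every cross term in $\sum_k N_{ik}X_{ki}$ drops out. I expect the main subtlety to lie in the positivity certificate for condition (ii): it must be phrased as a row-positivity statement about $X_{ii}^T$ rather than about $X_{ii}$ directly, which is precisely why Lemma \ref{pptt} and the game-value description of $P$-matrices are invoked, exactly as in the earlier theorem of this section.
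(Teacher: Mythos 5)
Your proposal is correct and follows essentially the same route as the paper: extract the diagonal identities $N_{ii}X_{ii}=Y_{ii}$ from the shared triangular structure, note $X_{ii},Y_{ii}$ are $K$-matrices, and obtain the positivity certificate from $v(X_{ii}^T)>0$ via Lemma \ref{pptt}. The only (harmless) difference is that you take $s=0$ and certify positivity through $X_{ii}$ alone, whereas the paper uses nonnegative vectors $r_i,s_i$ with $X_{ii}^Tr_i+Y_{ii}^Ts_i>0$; both are valid instances of condition (ii).
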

\begin{proof}
	Let $N$ be a hidden block triangular $K$- matrix with $NX=Y,$ where $X$ and $Y$ are block triangular $K$-matrices. Let \begin{spacing}{1.5} \NI  $N=\left[\begin{array}{rrrrr} 
	N_{11} & 0 & 0 & \cdots & 0\\
	N_{21} & N_{22} & 0  & \cdots & 0\\	
	N_{31} & N_{32} & N_{33} & \cdots & 0\\
	\vdots & \vdots & \vdots & \vdots & \vdots\\
	N_{n1} & N_{n2} & N_{n3} & \cdots & N_{nn}\\
	\end{array}\right],$ \end{spacing} \begin{spacing}{1.5} \NI $X=\left[\begin{array}{rrrrr} 
	X_{11} & 0 & 0 & \cdots & 0\\
	X_{21} & X_{22} & 0  & \cdots & 0\\	
	X_{31} & X_{32} & X_{33} & \cdots & 0\\
	\vdots & \vdots & \vdots & \vdots & \vdots\\
	X_{n1} & X_{n2} & X_{n3} & \cdots & X_{nn}\\
	\end{array}\right]$  and \end{spacing} \begin{spacing}{1.5} \NI $Y=\left[\begin{array}{rrrrr} 
	Y_{11} & 0 & 0 & \cdots & 0\\
	Y_{21} & Y_{22} & 0  & \cdots & 0\\	
	Y_{31} & Y_{32} & Y_{33} & \cdots & 0\\
	\vdots & \vdots & \vdots & \vdots & \vdots\\
	Y_{n1} & Y_{n2} & Y_{n3} & \cdots & Y_{nn}\\
	\end{array}\right].$ \end{spacing} \NI The block diagonal of $NX$ are $N_{ii}X_{ii}$ for $i \in \{1,2,\cdots n\}.$ So $N_{ii}X_{ii}=Y_{ii}$ for $i \in \{1,2,\cdots n\}.$ $X_{ii}, Y_{ii}$ are $K$-matrices. Then $X_{ii}^T, Y_{ii}^T$ are also $K$-matrices. So $v(X_{ii}^T)>0,$ $v(Y_{ii}^T)>0.$ Let $r_i,s_i \in {R^{m}}_+ $ such that $X_{ii}^Tr_i + Y_{ii}^Ts_i >0 \implies r_i^TX_{ii}+ s_i^TY_{ii}>0.$ Hence the block diagonals of $N$ are hidden $Z$-matrices.
\end{proof}
\begin{theorem}
	Let $N$ be a hidden block triangular $K$-matrix. Then every determinant of block diagonal matrices of $N$ are positive.
\end{theorem}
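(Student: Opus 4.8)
The plan is to reduce the statement to the block-diagonal factorization already extracted in the preceding theorem and then invoke multiplicativity of the determinant. Since $N$ is a hidden block triangular $K$-matrix, there are block triangular $K$-matrices $X$ and $Y$ with $NX=Y$. First I would record, exactly as in the previous theorem, that in the triangular block setting the $(i,i)$ block of the product picks out only the single term $N_{ii}X_{ii}$: writing $(NX)_{ii}=\sum_k N_{ik}X_{ki}$ and using that $N_{ik}=0$ for $k>i$ and $X_{ki}=0$ for $k<i$ (the off-diagonal blocks vanish on the appropriate side), the sum collapses to $N_{ii}X_{ii}$. Hence $N_{ii}X_{ii}=Y_{ii}$ for each $i\in\{1,2,\ldots,n\}$, where $X_{ii}$ and $Y_{ii}$ are $K$-matrices by the definition of a block triangular $K$-matrix.

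Next I would use the fact that every $K$-matrix is, by definition, a $P$-matrix, so all of its principal minors are positive; in particular the full determinant is positive. This yields $\det X_{ii}>0$ and $\det Y_{ii}>0$, and it also guarantees that $X_{ii}$ is nonsingular.

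Finally I would take determinants on both sides of $N_{ii}X_{ii}=Y_{ii}$ and apply $\det(AB)=\det(A)\det(B)$ to obtain $\det(N_{ii})\det(X_{ii})=\det(Y_{ii})$, whence
\[
\det(N_{ii})=\frac{\det(Y_{ii})}{\det(X_{ii})}>0
\]
as a quotient of two positive numbers. Since $i$ is arbitrary, every block-diagonal determinant of $N$ is positive, which is the claim.

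I do not expect a genuine obstacle here: the argument is purely formal once the diagonal-block identity $N_{ii}X_{ii}=Y_{ii}$ is in hand, and the only fact that does any real work is that a $K$-matrix, being a $P$-matrix, has positive determinant. The one point worth stating cleanly is the collapse of the off-diagonal blocks in the product, which is immediate from the triangular structure and is the same observation already used in the preceding theorem.
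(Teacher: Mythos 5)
Your proposal is correct and follows essentially the same route as the paper: both extract the diagonal-block identity $N_{ii}X_{ii}=Y_{ii}$ from the triangular structure, note that $X_{ii}$ and $Y_{ii}$ are $K$-matrices (hence $P$-matrices with positive determinants), and conclude $\det(N_{ii})>0$. Your write-up is slightly more explicit than the paper's, which leaves the multiplicativity step $\det(N_{ii})\det(X_{ii})=\det(Y_{ii})$ and the collapse of the off-diagonal terms in $(NX)_{ii}$ unstated, but the argument is the same.
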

\begin{proof}
	Let $N$ be a hidden block triangular $K$-matrix with $NX=Y,$ where $X$ and $Y$ are block triangular $K$-matrices. Let  \begin{spacing}{1.5} \NI $N=\left[\begin{array}{rrrrr} 
	N_{11} & 0 & 0 & \cdots & 0\\
	N_{21} & N_{22} & 0  & \cdots & 0\\	
	N_{31} & N_{32} & N_{33} & \cdots & 0\\
	\vdots & \vdots & \vdots & \vdots & \vdots\\
	N_{n1} & N_{n2} & N_{n3} & \cdots & N_{nn}\\
	\end{array}\right],  $ \end{spacing}  
\begin{spacing}{1.5} \NI
$X=\left[\begin{array}{rrrrr} 
	X_{11} & 0 & 0 & \cdots & 0\\
	X_{21} & X_{22} & 0  & \cdots & 0\\	
	X_{31} & X_{32} & X_{33} & \cdots & 0\\
	\vdots & \vdots & \vdots & \vdots & \vdots\\
	X_{n1} & X_{n2} & X_{n3} & \cdots & X_{nn}\\
	\end{array}\right]$ and    $Y=\left[\begin{array}{rrrrr} 
	Y_{11} & 0 & 0 & \cdots & 0\\
	Y_{21} & Y_{22} & 0  & \cdots & 0\\	
	Y_{31} & Y_{32} & Y_{33} & \cdots & 0\\
	\vdots & \vdots & \vdots & \vdots & \vdots\\
	Y_{n1} & Y_{n2} & Y_{n3} & \cdots & Y_{nn}\\
	\end{array}\right].$ \end{spacing}
 \NI The block diagonal of $NX$ are $N_{ii}X_{ii}$ for $i \in \{1,2,\cdots n\}.$ So $N_{ii}X_{ii}=Y_{ii}$ for $i \in \{1,2,\cdots n\}.$ $X_{ii}, Y_{ii}$ are $K$-matrices. Then $\det (X_{ii}),\det (Y_{ii})>0 \ \forall \ i.$ Hence $\det (N_{ii})>0 \ \forall \ i.$  
\end{proof}
\begin{corol}
	Every block triangular $K$-matrix is a hidden block triangular $K$-matrix.
\end{corol}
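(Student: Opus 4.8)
The plan is to exhibit the identity matrix as the witnessing factor. To certify that $M$ is a hidden block triangular $K$-matrix, the definition asks only for two block triangular $K$-matrices $X$ and $Y$, of the same triangular form as $M$, with $MX=Y$. The obvious candidate is the trivial factorization $M=M\cdot I$, i.e. $X=I$ and $Y=M$.

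First I would record that the identity matrix $I\in R^{mn\times mn}$ is itself a block triangular $K$-matrix. Partitioned into $m\times m$ blocks it is block diagonal, so every off-diagonal block vanishes; in particular it satisfies the vanishing condition defining the upper triangular form and the one defining the lower triangular form at the same time. Moreover each diagonal block is the $m\times m$ identity, which is a $Z$-matrix all of whose principal minors equal $1>0$, hence a $K$-matrix. This is precisely the fact already invoked in the proof of the preceding corollary. Because $I$ may be viewed as being in whichever triangular form $M$ happens to be in, the structural compatibility demanded in the definition of a hidden block triangular $K$-matrix is automatic.

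Next, given a block triangular $K$-matrix $M$, I would set $X=I$ and $Y=M$. Then $Y=M$ is a block triangular $K$-matrix by hypothesis, $X=I$ is a block triangular $K$-matrix of the matching triangular form by the previous paragraph, and $MX=MI=M=Y$. If in addition one wishes to verify the positivity condition inherited from the hidden $Z$-matrix definition, namely $r^TX+s^TY>0$ for some $r,s\geq 0$, it is met by taking $r=e>0$ and $s=0$, which gives $r^TX+s^TY=e^TI=e^T>0$. Hence $M$ qualifies as a hidden block triangular $K$-matrix, and since $M$ was arbitrary the inclusion holds. There is essentially no obstacle beyond bookkeeping here: the statement only asserts that the class of hidden block triangular $K$-matrices contains the block triangular $K$-matrices, and the factorization $M=M\cdot I$ makes this transparent. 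The one point deserving a line of care is confirming that $I$ respects the triangular-form convention of the definition, which it does vacuously since all its off-diagonal blocks are zero.
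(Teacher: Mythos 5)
Your proposal is correct and follows exactly the paper's own argument: take $X=I$ and $Y=M$, noting that the identity is itself a block triangular $K$-matrix so that the trivial factorization $MI=M$ witnesses the hidden structure. The extra verifications you include (that $I$'s diagonal blocks are $K$-matrices, and the positivity condition $r^TX+s^TY>0$ with $r=e$, $s=0$) are sound elaborations of details the paper leaves implicit.
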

\begin{proof}
	Let $M$ be a block triangular $K$-matrix. Taking $X=I,$ the identity matrix, it is clear that $M$ is a hidden block triangular $K$-matrices.
	\end{proof}
\begin{theorem}\label{tthh}
	The linear complementarity problem LCP$(N,q),$ where $N$ is a hidden block triangular $K$-matrix with $NX=Y, \ X \ \text{and} \ Y$ are block triangular $K$-matrices, has a solution which can be obtained by solving the linear program $\min $ \ $p^Tx $ subject to $Nx+q \geq 0,$ $x \geq 0,$ where $p=r+N^Ts \geq 0$ and $r,s \geq 0$ such that $X^Tr>0$ and $Y^Ts>0.$
\end{theorem}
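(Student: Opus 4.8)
The plan is to reduce the claim to Mangasarian's criterion, Lemma \ref{lemm}. I would first form the linear program $\min p^Tx$ subject to $Nx+q\ge 0,\ x\ge 0$ together with its dual $\max -q^Ty$ subject to $-N^Ty+p\ge 0,\ y\ge 0$, and take optimal solutions $x$ and $y$ of the primal and dual respectively, their existence being the standing feasibility assumption exactly as in the block triangular $K$ case treated earlier. By Lemma \ref{lemm}, once the strict inequality $(I-N^T)y+p>0$ is verified, the primal optimizer $x$ automatically solves LCP$(N,q)$, which is precisely the assertion. So the whole proof comes down to establishing $(I-N^T)y+p>0$.

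The engine for that is the identity $NX=Y$ combined with the prescribed $p=r+N^Ts$. First I would record the algebraic identity $p^TX=(r+N^Ts)^TX=r^TX+s^TNX=r^TX+s^TY$, where the last equality uses $NX=Y$. Since $X^Tr>0$ and $Y^Ts>0$ by hypothesis, taking transposes gives the strictly positive row vectors $r^TX>0$ and $s^TY>0$, whence $p^TX>0$. Next I would inject the dual variable through the vanishing term $y^T(Y-NX)=0$, rewriting $p^TX=p^TX+y^T(Y-NX)=(p^T-y^TN)X+y^TY$. This isolates the two nonnegative row vectors $p^T-y^TN\ge 0$ (dual feasibility) and $y^T\ge 0$, paired against $X$ and $Y$ respectively.

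Finally I would exploit that $X$ and $Y$, being block triangular $K$-matrices, are $Z$-matrices, so they admit splittings $X=D_X-V$ and $Y=D_Y-W$ in which $D_X,D_Y$ carry the positive diagonal entries blockwise and $V,W\ge 0$. Using $p^T-y^TN\ge 0$, $y\ge 0$ and $V,W\ge 0$, I would bound $(p^T-y^TN)X\le (p^T-y^TN)D_X$ and $y^TY\le y^TD_Y$, so that $0<p^TX\le (p^T-y^TN)D_X+y^TD_Y$. Reading this inequality componentwise against the positive diagonal entries of $D_X$ and $D_Y$ should then force $(p^T-y^TN)+y^T>0$, i.e. $(I-N^T)y+p>0$, and Lemma \ref{lemm} closes the argument.

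The delicate point, and the step I expect to be the real obstacle, is precisely the passage from the scalar inequality $0<(p^T-y^TN)D_X+y^TD_Y$ to the componentwise strict positivity of $w=(I-N^T)y+p$. Because the two diagonal factors $D_X$ and $D_Y$ need not coincide and the blocks are only triangular rather than genuinely diagonal, one cannot simply divide them out; I would instead argue blockwise, peeling off the diagonal blocks one at a time in the manner of Theorem \ref{p}. A helpful auxiliary observation is that block triangular $K$-matrices have nonnegative inverses, so $X^{-1},Y^{-1}\ge 0$; applying these to $X^Tr>0$ and $Y^Ts>0$ upgrades the hypotheses to $r>0$ and $s>0$, which is what I would use to supply the strictness at exactly those indices where $y$ vanishes and only the surviving term $p-N^Ty$ controls the sign of $w$.
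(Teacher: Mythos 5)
Your skeleton matches the paper's proof exactly: primal/dual LP pair, reduction to Mangasarian's criterion (Lemma \ref{lemm}), the identity $p^TX=r^TX+s^TY>0$ via $NX=Y$, insertion of the vanishing term $y^T(Y-NX)$, and a splitting argument that discards nonnegative pieces. But your execution of the decisive step rests on a false premise: block triangular $K$-matrices are \emph{not} $Z$-matrices. By the paper's definition every block, including the off-diagonal blocks $X_{ij}$, $i>j$, is a $K$-matrix, so those blocks have \emph{positive} diagonal entries sitting off the main diagonal of $X$ (in the paper's first example the $(3,1)$ entry of $M$ is $3$). Consequently no splitting $X=D_X-V$ with $D_X$ diagonal and $V\ge 0$ exists; the part carrying the positive entries must itself be block triangular, with a positive diagonal matrix in every lower block position. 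That is what the paper does, choosing moreover a single common $D$ with $X=D-U$, $Y=D-V$, so its bound collapses to $0<\bigl(y^T(I-N)+p^T\bigr)D$. Note also that if $D_X,D_Y$ really could be taken diagonal, your worry about ``dividing them out'' would be unfounded: since $p-N^Ty\ge 0$ (dual feasibility) and $y\ge 0$, positivity of a positively weighted sum of two nonnegative numbers already forces positivity of their unweighted sum, whether or not the weights agree. The genuine obstruction, which you correctly sense but do not overcome, is that the triangular factor mixes components: for $v\ge 0$, $v^TD>0$ does \emph{not} imply $v>0$ when $D$ is only block triangular (take $D=\left(\begin{smallmatrix}1&0\\1&1\end{smallmatrix}\right)$ and $v=(0,1)^T$, so $v^TD=(1,1)>0$).

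Your proposed rescue also fails on both counts. Block triangular $K$-matrices need not have nonnegative inverses: $X=\left(\begin{smallmatrix}1&0\\3&1\end{smallmatrix}\right)$ is block triangular $K$ (all three $1\times 1$ blocks are $K$-matrices), yet $X^{-1}=\left(\begin{smallmatrix}1&0\\-3&1\end{smallmatrix}\right)\not\ge 0$. And the upgrade $X^Tr>0\Rightarrow r>0$ is false for the same matrix: $r=(0,1)^T$ gives $X^Tr=(3,1)^T>0$. So you may not assume $r,s>0$; and even granting it, your closing sentence only gestures at the needed argument, since at an index with $y_i=0$ one must show $(p-N^Ty)_i=\bigl(r+N^T(s-y)\bigr)_i>0$, which positivity of $r$ alone does not deliver because $N^T(s-y)$ can be negative there. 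In short, the step you flagged as ``the real obstacle'' is exactly where the whole difficulty lives, and your proposal does not close it. (For comparison, the paper closes it by fiat: after reaching $0<\bigl(y^T(I-N)+p^T\bigr)D$ it asserts positivity of $y^T(I-N)+p^T$ from the structure of $D$, an inference that requires precisely the care you anticipated, since $D$ there is block triangular rather than diagonal.)
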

\begin{proof}
	
		Let $N$ be a hidden block triangular $K$- matrix with $NX=Y,$ where $ \ X \ \text{and} \ Y$ are block triangular $K$-matrices. The linear program, $\min $ \ $p^Tx $ subject to $Nx+q \geq 0,$ $x \geq 0$ and the dual linear program, $\max -q^Ty$ subject to $-N^Ty+p \geq 0,\  y \geq 0$ have solutions $x$ and $y$ respectively. $X$ can be written as $D-U,$ where\\
		\begin{spacing}{1.5} \NI
		 $D=\left[\begin{array}{rrrrr} 
	D_{11} & 0 & 0 & \cdots & 0\\
	D_{21} & D_{22} & 0  & \cdots & 0\\	
	D_{31} & D_{32} & D_{33} & \cdots & 0\\
	\vdots & \vdots & \vdots & \vdots & \vdots\\
	D_{n1} & D_{n2} & D_{n3} & \cdots & D_{nn}\\
	\end{array}\right],$
\end{spacing} \NI $ D_{ij}$'s are diagonal matrices with positive entries and
	\begin{spacing}{1.5} \NI
 $U=\left[\begin{array}{rrrrr} 
	U_{11} & 0 & 0 & \cdots & 0\\
	U_{21} & U_{22} & 0  & \cdots & 0\\	
	U_{31} & U_{32} & U_{33} & \cdots & 0\\
	\vdots & \vdots & \vdots & \vdots & \vdots\\
	U_{n1} & U_{n2} & U_{n3} & \cdots & U_{nn}\\
	\end{array}\right],$
	\end{spacing}
	\NI $U_{ij}$'s are matrices with nonnegative entries.\\
	 $Y$ can be written as $D-V.$ Then the matrix product $NX$ can be written as $D-V,$ where 
	 	\begin{spacing}{1.5} \NI
	  $V=\left[\begin{array}{rrrrr} 
	V_{11} & 0 & 0 & \cdots & 0\\
	V_{21} & V_{22} & 0  & \cdots & 0\\	
	V_{31} & V_{32} & V_{33} & \cdots & 0\\
	\vdots & \vdots & \vdots & \vdots & \vdots\\
	V_{n1} & V_{n2} & V_{n3} & \cdots & V_{nn}\\
	\end{array}\right],$
\end{spacing}
\NI	$ V_{ij}$'s are matrices with nonnegative entries. \vsp \NI As $X,Y$ are block triangular  $K$-matrices, so they are  $P$-matrices. So $v(X)>0, v(Y)>0.$ Let $r \geq 0$ is the value of $X^T$ and  $s \geq 0$ is the value of $Y^T.$ Then $0<r^TX+s^TY=(r^T+s^TN)X=p^TX=p^T(D-U)\\=p^T(D-U)+y^T(-ND+NU+D-V) ,\ \ $ since $N(D-U)=D-V$\\
	$=(p^T-y^TN)(D-U)+y^T(D-V) $\\ $\leq (y^T(I-N)+p^T)D,\ \ $ since $-y^TN+p^T \geq 0, U \geq 0, V \geq 0, y \geq 0.$ \\ Now $D_{ij}$'s are diagonal matrices with positive entries and $D$ is formed with the block matrices $D_{ij}$'s. Hence $y^T(I-N)+p^T>0.$ By lemma \ref{lemm}, $x$ solves the LCP$(N,q),$ which is a solution of $\min $ \ $p^Tx $ subject to $Nx+q \geq 0,$ $x \geq 0.$ 
	
\end{proof}

\begin{lemma}\label{ttth}
	Let $N$ be a hidden block triangular $K$-matrix. Consider the LCP$(\cal{N},$$\bar{q}),$ where $\cal{N}=\left[\begin{array}{rr} 
    0 & -N^T \\
	N & 0 \\	
	\end{array}\right],$ \ $ \bar{q}=\left[\begin{array}{rr} 
r+N^Ts\\
	q\\	
	\end{array}\right] $ and $r,s$ as mentioned in  theorem \ref{tthh}. If $\left[\begin{array}{rr} 
	x\\
	y\\	
	\end{array}\right] \in \text{FEA}(\cal{N},$$ \bar{q}),$ then $(I-N^T)y+p >0,$ where $p=r+N^Ts.$
\end{lemma}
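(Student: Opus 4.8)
The plan is to unpack the feasibility condition for $\mathcal{N}$ and then re-run, for this particular $y$, the estimate already used in the proof of Theorem \ref{tthh}. First I would observe that $\mathcal{N}\begin{pmatrix} x\\ y\end{pmatrix} = \begin{pmatrix} -N^T y\\ Nx\end{pmatrix}$, so that $\begin{pmatrix} x\\ y\end{pmatrix}\in \text{FEA}(\mathcal{N},\bar q)$ amounts to $x\ge 0$, $y\ge 0$, together with the two block inequalities $r+N^Ts-N^Ty\ge 0$ (the first block, since the top block of $\bar q$ is $p=r+N^Ts$) and $q+Nx\ge 0$ (the second block, which plays no role in the conclusion). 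Writing $w:=(I-N^T)y+p$, the first block inequality is exactly $p-N^Ty\ge 0$, and since $w = y+(p-N^Ty)$ is a sum of two nonnegative vectors we obtain $w\ge 0$ for free; the whole content of the lemma is therefore the strict inequality $w>0$.

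Next I would bring in the data defining $N$. As in Theorem \ref{tthh}, because $X$ and $Y$ are block triangular $K$-matrices they are $P$-matrices, so $X^T$ and $Y^T$ have positive game value and there exist $r,s\ge 0$ with $X^Tr>0$ and $Y^Ts>0$; these are the $r,s$ fixed in the statement. Using $NX=Y$ and $p=r+N^Ts$ I would record the auxiliary positivity $p^TX = r^TX + s^TNX = r^TX + s^TY = (X^Tr)^T+(Y^Ts)^T>0$, which is the engine of the whole argument.

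Now I would specialise the computation of Theorem \ref{tthh} to the present $y$. Splitting $X=D-U$ and $Y=D-V$, where $D$ is block lower triangular with each block $D_{ij}$ a diagonal matrix with positive entries and $U,V\ge 0$, and using $N(D-U)=NX=D-V$ together with the feasibility inequality $p^T-y^TN\ge 0$, I would chain $0 < p^TX = (p^T-y^TN)(D-U)+y^T(D-V) \le (p^T-y^TN)D + y^TD = \big(y^T(I-N)+p^T\big)D = w^TD$, the inequality coming from $p^T-y^TN\ge 0$, $y\ge 0$ and $U,V\ge 0$. This yields $w^TD>0$.

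The last step, and the one I expect to be the main obstacle, is to pass from the aggregate inequality $w^TD>0$ to the coordinatewise strict positivity $w>0$. Because every block $D_{ij}$ is diagonal, $w^TD>0$ decouples, one scalar coordinate of the $m\times m$ blocks at a time, into $n\times n$ lower triangular systems whose diagonal entries are positive; combined with $w\ge 0$ from the first paragraph, the intended route is a back-substitution running from the last block upward. The delicate point is that the strictly lower blocks of $D$ are positive rather than sign-constrained, so $D$ is not a $Z$-matrix and $D^{-1}$ need not be nonnegative; hence the implication $w^TD>0 \Rightarrow w>0$ cannot be read off from positivity of the diagonal alone and must be argued with care, leaning on the already-established $w\ge 0$ to control the (nonnegative) contributions of the below-diagonal blocks. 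Once $w>0$ is secured the lemma follows, and it is precisely the hypothesis $(I-N^T)y+p>0$ required to invoke Lemma \ref{lemm} in the accompanying processability result.
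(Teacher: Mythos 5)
Your chain of identities and inequalities is exactly the paper's own proof: the same splitting $X=D-U$, $Y=D-V$, the same use of $N(D-U)=D-V$, and the same estimate ending in $0<\bigl(y^T(I-N)+p^T\bigr)D=w^TD$, where $w:=(I-N^T)y+p$, together with the (correct, and not even recorded in the paper) observation that feasibility gives $w=y+(p-N^Ty)\ge 0$. However, the step you defer to the end --- deducing $w>0$ from $w^TD>0$ and $w\ge 0$ --- is not a delicate point that can be ``argued with care''; it is a genuine gap, and it is precisely the step at which the paper's own proof is invalid (the paper simply asserts ``since $D$ is a positive block triangular diagonal matrix, $(I-N^T)y+p>0$''). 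Your back-substitution works only for the last block: $(w^TD)_n=w_n^TD_{nn}>0$ does force $w_n>0$, since $D_{nn}$ is diagonal with positive entries. But for $j<n$ one has $(w^TD)_j=w_j^TD_{jj}+\sum_{i>j}w_i^TD_{ij}$, and the below-diagonal blocks $D_{ij}$ are themselves positive diagonal matrices (they are the diagonal parts of the $K$-matrix blocks $X_{ij}$, whose diagonals are positive --- this is what makes $D$ non-diagonal here, unlike the classical hidden $Z$ case). So once $w_i>0$ for all $i>j$, the inequality $(w^TD)_j>0$ holds automatically and imposes nothing on $w_j$ beyond $w_j\ge 0$; the induction cannot proceed.

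Moreover, no alternative argument can close the gap, because the lemma as stated is false. Take $m=1$, $n=2$, so that the nonzero blocks are positive scalars, which are $K$-matrices (for $m\ge 2$ replace each scalar $\alpha$ below by $\alpha I_m$). Let
\[
X=\begin{pmatrix}1 & 0\\ 0.1 & 1\end{pmatrix},\qquad Y=\begin{pmatrix}1 & 0\\ 1 & 1\end{pmatrix},\qquad N=YX^{-1}=\begin{pmatrix}1 & 0\\ 0.9 & 1\end{pmatrix},
\]
and $r=s=(0,1)^T$. Then $X^Tr=(0.1,1)^T>0$, $Y^Ts=(1,1)^T>0$, and $p=r+N^Ts=(0.9,2)^T\ge 0$, so all requirements of Theorem \ref{tthh} are met. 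Choose any $q>0$ and set $x=0$, $y=(0,1)^T$. Then $p-N^Ty=(0,1)^T\ge 0$ and $q+Nx=q>0$, so $\left[\begin{smallmatrix}x\\ y\end{smallmatrix}\right]\in\mathrm{FEA}(\mathcal{N},\bar q)$; yet $(I-N^T)y+p=(0,2)^T$, which is not $>0$. (Here $w^TD=(0.2,2)>0$ and $w\ge 0$, so every fact you actually established holds --- confirming that those facts do not imply the conclusion.) So your instinct about where the difficulty sits is exactly right, and the honest verdict is that neither your proposal nor the paper's argument proves the lemma: it fails because the below-diagonal $K$-blocks inject positive entries into $D$. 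The lemma (and the results built on it, Theorem \ref{tthh} and the $Q_0$ theorem) would need amended hypotheses --- e.g.\ $n=1$ (Mangasarian's hidden $Z$ setting) or off-diagonal blocks with nonpositive entries --- under which $D$ is genuinely diagonal and the final step becomes valid.
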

\begin{proof}
	Suppose $\left[\begin{array}{rr} 
	x\\
	y\\	
	\end{array}\right] \in \text{FEA}(\cal{N},$$ \bar{q}).$ Since $N$ is a hidden block triangular $K$-matrix, there exist two block triangular $K$-matrices $X$ and $Y$ such that $NX=Y$ and $r,s \geq 0,$  $r^TX+s^TY>0.$ Let $X=D-U$ and $Y=D-V,$ where $U$ and $V$ are two square matrices with all nonnegative entries and $D$ is a block triangular diagonal matrix with positive entries as mentioned in theorem \ref{tthh}. Then $0<r^TX+s^TY=r^TX+s^TNX=p^T(D-U)=p^T(D-U)+y^T(Y-NX)=p^T(D-U)+y^T(D-V-N(D-U))=(-y^TN+p^T)(D-U)+y^T(D-V)\leq (y^T(I-N)+p^T)D \ $ since $\left[\begin{array}{rr} 
	x\\
	y\\	
	\end{array}\right] \in \text{FEA}(\cal{N},$$ \bar{q}),$ \ $U\geq 0, V\geq 0$. Since $D$ is a positive block triangular diagonal matrix, $ (I-N^T)y+p >0.$
\end{proof}
\begin{theorem}\label{ttthhh}
	LCP$(\cal{N},$$\bar{q})$ has a solution iff LCP$(N,q)$ has a solution.
\end{theorem}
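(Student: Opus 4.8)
The plan is to recognize LCP$(\mathcal{N},\bar{q})$ as the complementarity encoding of the dual pair of linear programs that already appears in Theorem \ref{tthh}, and then to feed the two ingredients Lemma \ref{ttth} and Lemma \ref{lemm} into the two directions of the equivalence. First I would write out the defining system of LCP$(\mathcal{N},\bar{q})$ explicitly. Putting $p=r+N^Ts$ and $\bar{z}=\begin{bmatrix} x \\ y \end{bmatrix}$, the relation $\bar{w}-\mathcal{N}\bar{z}=\bar{q}$ splits into $w_1=p-N^Ty\geq 0$ and $w_2=q+Nx\geq 0$, while $\bar{w}^T\bar{z}=0$ with all entries nonnegative forces $x^T(p-N^Ty)=0$ and $y^T(q+Nx)=0$. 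These are exactly primal feasibility ($Nx+q\geq 0,\ x\geq 0$), dual feasibility ($-N^Ty+p\geq 0,\ y\geq 0$), and the two complementary slackness conditions for the primal program $\min p^Tx$ subject to $Nx+q\geq 0,\ x\geq 0$ and its dual $\max -q^Ty$ subject to $-N^Ty+p\geq 0,\ y\geq 0$.

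For the forward direction, suppose $\begin{bmatrix} x \\ y \end{bmatrix}$ solves LCP$(\mathcal{N},\bar{q})$. Then $x$ is primal feasible, $y$ is dual feasible, and complementary slackness holds, so weak LP duality forces $p^Tx=-q^Ty$ and hence $x$ is an optimal solution of the primal program with $y$ a corresponding optimal dual variable. Since $\begin{bmatrix} x \\ y \end{bmatrix}\in\text{FEA}(\mathcal{N},\bar{q})$, Lemma \ref{ttth} gives $(I-N^T)y+p>0$, which is precisely the hypothesis required by Lemma \ref{lemm}. Applying Lemma \ref{lemm} with $M=N$ then yields that $x$ solves LCP$(N,q)$.

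For the converse, suppose LCP$(N,q)$ has a solution. Then $\text{FEA}(N,q)\neq\emptyset$, so the primal program is feasible; moreover the dual is feasible because $y=s$ satisfies $p-N^Ts=r\geq 0$. With both programs feasible, the fundamental theorem of linear programming together with strong duality produces optimal primal $x$ and optimal dual $y$ satisfying complementary slackness. Setting $w_1=p-N^Ty$ and $w_2=q+Nx$, the tuple $\begin{bmatrix} x \\ y \end{bmatrix}$ then meets all the conditions read off in the first paragraph, so it solves LCP$(\mathcal{N},\bar{q})$.

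The main obstacle is not a deep argument but the careful bookkeeping that identifies the LCP$(\mathcal{N},\bar{q})$ system term-for-term with primal/dual feasibility plus complementary slackness — in particular correctly reading the two complementarity blocks out of $\bar{w}^T\bar{z}=0$ and matching the optimal dual variable demanded by Lemma \ref{lemm} with the $y$-component of the LCP solution, together with verifying dual feasibility through $y=s$. Once this identification is in place, each implication follows immediately from standard linear programming duality combined with Lemma \ref{ttth} and Lemma \ref{lemm}.
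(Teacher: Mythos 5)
Your proof is correct, but it runs through LP duality machinery where the paper stays more elementary, and the difference shows in both directions. For the forward implication the paper never invokes LP optimality at all: from $x^T(p-N^Ty)=0$ and $y^T(Nx+q)=0$ it applies Lemma \ref{ttth} and argues elementwise that $(I-N^T)y+p>0$ forces, for each index $i$, either $(p-N^Ty)_i>0$ (whence $x_i=0$) or $y_i>0$ (whence $(q+Nx)_i=0$), so $x_i(q+Nx)_i=0$ throughout; you instead promote $(x,y)$ to an optimal primal--dual pair via complementary slackness and then cite Lemma \ref{lemm} as a black box --- legitimate, and essentially a repackaging, since that elementwise argument is what sits inside Mangasarian's lemma anyway. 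The genuine divergence is in the converse: the paper only verifies that $\left[\begin{array}{c} x \\ s \end{array}\right]$ is \emph{feasible} for LCP$({\cal N},\bar{q})$ and then appeals to the fact that the skew-symmetric matrix ${\cal N}$ is $PSD$, hence $Q_0$, so feasibility already yields solvability; you instead invoke strong LP duality (primal feasible by hypothesis, dual feasible at $y=s$) to produce an optimal pair satisfying complementary slackness, which is an explicit solution of LCP$({\cal N},\bar{q})$. Your route is more constructive and avoids needing the $PSD \Rightarrow Q_0$ fact; the paper's route avoids strong duality, at the cost of concluding existence non-constructively (and its closing sentence, asserting that the particular feasible point $\bar{z}$ itself solves the problem, is slightly overstated --- $Q_0$ only guarantees \emph{some} solution exists, which is all the theorem needs). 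Note also that both converses really use only feasibility of LCP$(N,q)$ rather than solvability, which is precisely the strengthening the paper exploits in the subsequent theorem that hidden block triangular $K$-matrices are $Q_0$.
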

\begin{proof}
	Suppose LCP$(\cal{N},$$\bar{q})$ has a solution. Let $\bar{z}=$$\left[\begin{array}{rr} 
		x\\
		y\\	
	\end{array}\right] \in \text{SOL}(\cal{N},$$ \bar{q}).$ From the complementarity condition it follows that $x^T(p-N^Ty)+y^T(Nx+q)=0.$ Since $p-N^Ty , Nx+q , x, y \geq 0,$ and $ x^T(p-N^Ty)=0, y^T(Nx+q)=0.$ By lemma \ref{ttth}, it follows that $y+(p-N^Ty)>0.$ This implies for all $i$ either $(p-N^Ty)_i>0$ or $y_i>0.$ Now if $(p-N^Ty)_i>0,$ then $x_i=0.$ If $y_i>0$ then $(q+Nx)_i=0.$ This implies $x_i(q+Nx)_i=0 \ \forall \ i.$ Therefore $x$ solves LCP$(N,q)$.\\
	Conversely, $x$ solves LCP$(N,q).$ Let $y=s,$ where $s$ as mentioned in theorem \ref{tthh}. Here $ p-N^Ty=r+N^Ts-N^Ty=r+N^Ts-N^Ts=r \geq 0.$ So $\bar{z}=$$\left[\begin{array}{rr} 
	x\\
	s\\	
	\end{array}\right] \in \text{FEA}(\cal{N},$$ \bar{q}).$ Further $\cal{N}$ is $PSD$-matrix, which implies that $\cal{N}$$\in Q_0.$ Therefore  $\bar{z}$ solves the	LCP$(\cal{N},$$\bar{q}).$	
\end{proof}
\begin{theorem}
	All hidden block triangular $K$-matrices are $Q_0.$
\end{theorem}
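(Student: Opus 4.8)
The plan is to reduce the $Q_0$ property of $N$ to the equivalence already established in Theorem \ref{ttthhh}, together with the fact that the associated matrix $\mathcal{N}$ of Lemma \ref{ttth} is positive semidefinite and hence $Q_0$. Recall that to show $N \in Q_0$ I must verify, for every $q$, that $\text{FEA}(N,q) \neq \emptyset$ implies $\text{SOL}(N,q) \neq \emptyset$.

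First I would fix an arbitrary $q$ and assume $\text{FEA}(N,q) \neq \emptyset$, so there is some $x \geq 0$ with $q + Nx \geq 0$. The key step is to manufacture a feasible point for the enlarged problem $\text{LCP}(\mathcal{N}, \bar{q})$ of Lemma \ref{ttth}, where $\mathcal{N}=\begin{bmatrix} 0 & -N^T \\ N & 0 \end{bmatrix}$ and $\bar{q}=\begin{bmatrix} r+N^Ts \\ q \end{bmatrix}$ with $p=r+N^Ts$ and $r,s \geq 0$ chosen so that $X^Tr>0$ and $Y^Ts>0$. Taking the second block coordinate to be $y=s$, I compute $p-N^Ts = r+N^Ts-N^Ts = r \geq 0$, while the first block residual is $q+Nx \geq 0$ by feasibility; since also $x,s \geq 0$, the stacked vector $\begin{bmatrix} x \\ s \end{bmatrix}$ lies in $\text{FEA}(\mathcal{N}, \bar{q})$. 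Hence $\text{FEA}(\mathcal{N}, \bar{q}) \neq \emptyset$.

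Next I would invoke that $\mathcal{N}$ is a $PSD$-matrix, and $PSD$-matrices are $Q_0$; therefore $\text{FEA}(\mathcal{N}, \bar{q}) \neq \emptyset$ forces $\text{SOL}(\mathcal{N}, \bar{q}) \neq \emptyset$, that is, $\text{LCP}(\mathcal{N}, \bar{q})$ has a solution. Applying Theorem \ref{ttthhh}, the solvability of $\text{LCP}(\mathcal{N}, \bar{q})$ is equivalent to the solvability of $\text{LCP}(N,q)$, so $\text{LCP}(N,q)$ has a solution and $\text{SOL}(N,q) \neq \emptyset$. Since $q$ was arbitrary, $N \in Q_0$.

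The argument is essentially a bookkeeping assembly of the earlier results, so I do not anticipate a genuine obstacle. The only point requiring care is that the construction of $\bar{q}$ depends on the fixed data $r,s,p$ attached to $N$ (and to its factors $X,Y$) rather than on $q$, so that the $\bar{q}$-to-$q$ correspondence used in Theorem \ref{ttthhh} applies verbatim; and that the mere \emph{feasibility} of $\text{LCP}(N,q)$, rather than its solvability, already suffices to produce a feasible point of $\text{LCP}(\mathcal{N}, \bar{q})$, precisely because the choice $y=s$ always yields $p-N^Ty = r \geq 0$ independently of $q$.
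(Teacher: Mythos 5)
Your proposal is correct and follows essentially the same route as the paper: lift feasibility of LCP$(N,q)$ to feasibility of LCP$(\mathcal{N},\bar{q})$, use that $\mathcal{N}$ is $PSD$ and hence $Q_0$, and then transfer solvability back via Theorem \ref{ttthhh}. The only difference is that you spell out the feasibility-lifting step (choosing $y=s$ so that $p-N^Ty=r\geq 0$), which the paper dismisses with ``it is clear'' since the same construction already appears in the converse direction of its proof of Theorem \ref{ttthhh}.
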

\begin{proof}
	Let $N$ ba a hidden block triangular $K$-matrix. It is clear that feasibility of LCP$(N,q)$ implies the feasibility of LCP$(\cal{N},$$\bar{q}).$ Note that  $\cal{N}$$\in Q_0.$ This implies that the feasible point of LCP$(\cal{N},$$\bar{q})$ is also a solution of LCP$(\cal{N},$$\bar{q}).$ Hence by theorem \ref{ttthhh} feasibility of LCP$(N,q)$ ensures the solvability of LCP$(N,q).$ Therefore $N$ is a $Q_0$-matrix. 
\end{proof}
\begin{remk}
	\begin{spacing}{1.5}
	Let $M=\left[\begin{array}{rrrrr} 
	M_{11} & 0 & 0 & \cdots & 0\\
	M_{21} & M_{22} & 0  & \cdots & 0\\	
	M_{31} & M_{32} & M_{33} & \cdots & 0\\
	\vdots & \vdots & \vdots & \vdots & \vdots\\
	M_{n1} & M_{n2} & M_{n3} & \cdots & M_{nn}\\
	\end{array}\right] ,$ where $M_{ij} \in R^{m\times m}$ are $K$-matrices. 
\end{spacing}
\begin{spacing}{1.5}
	Let $z=\left[\begin{array}{rrrrr}
	z_1\\
	z_2\\
	z_3\\
	\vdots\\
	z_n\\
	\end{array}\right]$ and $q=\left[\begin{array}{rrrrr}
	q_1\\
	q_2\\
	q_3\\
	\vdots\\
	q_n\\
	\end{array}\right],$ where $z_i, q_i\in R^m .$ \end{spacing}
\begin{spacing}{1.5}
	Then $Mz+q=\left[\begin{array}{rrrrr}
	M_{11}z_1+q_1\\
	M_{21}z_1+M_{22}z_2+q_2\\
	M_{31}z_1+M_{32}z_2+M_{33}z_3+q_3\\
	\vdots\\
	M_{n1}z_1+M_{n2}z_2+M_{n3}z_3+ \cdots +M_{nn}z_n+q_n\\
	\end{array}\right].$ 
\end{spacing}
	First we solve LCP$(M_{11},q_1)$ and get the solution $w_1=M_{11}z_1+q_1, {w_1}^Tz_1=0.$ Then we solve LCP$(M_{22},M_{21}z_1+q_2)$ and get the solution $w_2=M_{22}z_2+M_{21}z_1+q_2, {w_2}^Tz_2=0.$ Finally we solve LCP$(M_{nn},M_{n1}z_1+M_{n2}z_2+M_{n3}z_3+\cdots+M_{n(n-1)}z_{n-1}+q_n)$ and get the solution $w_n=M_{nn}z_n+M_{n1}z_1+M_{n2}z_2+M_{n3}z_3+\cdots+M_{n(n-1)}z_{n-1}+q_n, {w_n}^Tz_n=0.$ \begin{spacing}{1.5} So $w=\left[\begin{array}{rrrrr}
	w_1\\
	w_2\\
	w_3\\
	\vdots\\
	w_n\\
	\end{array}\right]$ and $z=\left[\begin{array}{rrrrr}
	z_1\\
	z_2\\
	z_3\\
	\vdots\\
	z_n\\
	\end{array}\right]$ solve LCP$(M,q).$ \end{spacing}

\end{remk}
\section{Conclusion}
In this article, we introduce the class of block triangular $K$-matrix and the class of hidden block triangular $K$-matrix in the context of solution of linear complementarity problem. We call these two classes jointly as $K$-type block matricces. We show that the linear complementarity problem with $K$-type block matrix is solvable by linear program. The linear complementarity problem with block triangular $K$-matrix is also processable by Lemke's algorithm as well as criss-cross method. We show that the hidden block triangular $K$-matrix is a $Q_0$-matrix.

\section*{Acknowledgement}
The author A. Dutta is thankful to the Department of Science and Technology, Govt. of India, INSPIRE Fellowship Scheme for financial support.
\vsp

\bibliographystyle{plain}
\bibliography{bibfile}

\end{document}